\date{}
\newcommand{\ep}{\varepsilon}
\newcommand{\re}{\mathbb{R}}
\newcommand{\LS}{\mathcal{LS}}
\newtheorem{thm}{Theorem}[section]
\newtheorem{thmbibl}{Theorem}
\newtheorem{rmk}[thm]{Remark}
\newtheorem{prop}[thm]{Proposition}
\newtheorem{defn}[thm]{Definition}
\newtheorem{cor}[thm]{Corollary}
\newtheorem{lemma}[thm]{Lemma}
\title{Almost global existence for Kirchhoff equations around global solutions}
\author{Marina Ghisi\vspace{1ex}\\ 
{\normalsize Università degli Studi di Pisa} \\
{\normalsize Dipartimento di Matematica}\\ 
{\normalsize PISA (Italy)}\\
{\normalsize e-mail: \texttt{marina.ghisi@unipi.it}}
\and
Massimo Gobbino\vspace{1ex}\\ 
{\normalsize Università degli Studi di Pisa} \\
{\normalsize Dipartimento di Matematica}\\ 
{\normalsize PISA (Italy)}\\  
{\normalsize e-mail: \texttt{massimo.gobbino@unipi.it}}
}
\begin{document}
\maketitle

\begin{abstract}

It is well-known that the life span of solutions to Kirchhoff equations tends to infinity when initial data tend to zero. These results are usually referred to as \emph{almost global existence}, at least in a neighborhood of the null solution. 

Here we extend this result by showing that the life span of solutions is lower semicontinuous, and in particular it tends to infinity whenever initial data tend to some limiting datum that originates a global solution. 

We also provide an estimate from below for the life span of solutions when initial data are close to some of the classes of data for which global existence is known, namely data with finitely many Fourier modes, analytic data and quasi-analytic data.

\vspace{6ex}

\noindent{\bf Mathematics Subject Classification 2020 (MSC2020):} 
35L90, 35L20, 35L72.

		
\vspace{6ex}

\noindent{\bf Key words:} 
hyperbolic Kirchhoff equation, quasilinear hyperbolic equation, almost global existence, life span, quasi-analytic functions.

\end{abstract}

 
\section{Introduction}

Let $H$ be a real Hilbert space, and let $A$ be a positive self-adjoint operator on $H$ with dense domain $D(A)$. In this paper we consider the abstract evolution equation 
\begin{equation}
u''(t)+m\left(|A^{1/2}u(t)|^{2}\right)Au(t)=0,
\label{K:eqn}
\end{equation}
where $m:[0,+\infty)\to\re$ is a nonlinearity that we always assume to be of class $C^{1}$ and to satisfy the strict hyperbolicity assumption
\begin{equation}
	m(\sigma)\geq\nu_{0}>0
	\quad\quad
	\forall\sigma\geq 0.
	\label{hp:m-sh}
\end{equation}

Equation (\ref{K:eqn}) is an abstract version of the hyperbolic partial differential equation introduced by  G.~Kirchhoff in the celebrated monograph~\cite[Section~29.7]{Kirchhoff} as a model for the small transversal vibrations of elastic strings or membranes.

\paragraph{\textmd{\textit{Local and global existence results}}}

Existence of local/global solutions to equation (\ref{K:eqn}) with initial data
\begin{equation}
u(0)=u_{0},
\qquad\qquad
u'(0)=u_{1},
\label{K:data}
\end{equation}
has been deeply investigated in the literature. 

Concerning local-in-time existence, the classical result is that, for every initial condition $(u_{0},u_{1})\in D(A^{3/4})\times D(A^{1/4})$, there exists a positive real number $T$ such that problem (\ref{K:eqn})--(\ref{K:data}) admits a solution in the space
\begin{equation}
C^{0}\left([0,T],D(A^{3/4})\right)\cap C^{1}\left([0,T],D(A^{1/4})\right),
\label{defn:strong-sol}
\end{equation}
and this solution is unique among solutions in the same space. This result was substantially established by S.~Bernstein in the pioneering paper~\cite{1940-Bernstein}, and then refined by many authors (see~\cite{1996-TAMS-AroPan} for a modern version). Existence and uniqueness of local solutions behind this regularity threshold, and in particular for initial data in the so-called energy space $D(A^{1/2})\times H$, is still a largely open problem (the only result we are aware of in this direction is contained in the recent paper~\cite{gg:EnergySpaceSG}).

Global-in-time solutions to problem (\ref{K:eqn})--(\ref{K:data}) are known to exist in many different special cases, involving either special initial data (such as analytic data \cite{1940-Bernstein,1984-RNM-AroSpa,1992-InvM-DAnSpa,1992-Ferrara-DAnSpa}, quasi-analytic data \cite{1984-Tokyo-Nishihara,gg:K-Nishihara}, lacunary data \cite{2005-JDE-Manfrin,2006-JDE-Hirosawa,gg:K-Manfrinosawa,gg:K-Nishihara,2015-NLATMA-Hirosawa}), or special nonlinearities~\cite{1985-Pohozaev}, or dispersive equations and small data \cite{1980-QAM-GreHu,1993-ARMA-DAnSpa,2005-JDE-Yamazaki,2013-JMPA-MatRuz}, or spectral gap operators \cite{gg:K-Nishihara}.

There are no examples of local-in-time solutions that blow-up in some sense in finite time. As far as we know, it might happen that problem (\ref{K:eqn})--(\ref{K:data}) admits a global solution for every initial datum $(u_{0},u_{1})\in D(A^{3/4})\times D(A^{1/4})$, and even for every initial datum $(u_{0},u_{1})\in D(A^{1/2})\times H$. This remains the main notorious open question for Kirchhoff equations.

\paragraph{\textmd{\textit{Almost global existence}}}

Let us consider equation (\ref{K:eqn}) with initial data
\begin{equation}
u(0)=\ep u_{0},
\qquad\qquad
u'(0)=\ep u_{1},
\nonumber
\end{equation}
where $(u_{0},u_{1})\in D(A^{3/4})\times D(A^{1/4})$ and $\ep$ is a positive real number. In this case it turns out that the solution is defined at least in some interval $[0,T_{\ep}]$, where $T_{\ep}\to +\infty$ as $\ep\to 0^{+}$. In other words, the life span of the solution tends to $+\infty$ when initial data tend to~0. Results of this type are typical for nonlinear partial differential equations, and they are usually referred to as ``almost global existence''.

The simpler result of almost global existence is a by-product of the classical local existence result. For example, the local existence result of~\cite{1996-TAMS-AroPan} implies that, if
\begin{equation}
|u_{1}|^{2}+|A^{1/4}u_{1}|^{2}+|A^{1/2}u_{0}|^{2}+|A^{3/4}u_{0}|^{2}\leq\ep^{2},
\label{hp:smallness}
\end{equation}
then the solution to problem (\ref{K:eqn})--(\ref{K:data}) is defined at least on an interval $[0,T_{\ep}]$ with
\begin{equation}
T_{\ep}\geq\frac{c_{0}}{\ep^{2}},
\label{th:ep2}
\end{equation}
where $c_{0}$ is a suitable constant that depends on the behavior of $m$ in a neighborhood of the origin.

More refined results have been proved in the last years, with the aim of showing that the life span of solutions is longer than the one prescribed by (\ref{th:ep2}). In this direction, P.~Baldi and E.~Haus~\cite{2020-Nonlinearity-BalHau} considered (\ref{K:eqn}) in the concrete case where $m(\sigma):=1+\sigma$, and $A$ is (minus) the Laplacian in $[0,2\pi]^{d}$ with periodic boundary conditions. They obtained an estimate of the form
\begin{equation}
T_{\ep}\geq\frac{c_{1}}{\ep^{4}}
\label{th:ep4}
\end{equation}
provided that initial data satisfy the smallness condition (\ref{hp:smallness}) in $D(A^{3/4})\times D(A^{1/4})$ in dimension $d=1$, and the analogous smallness condition in $D(A)\times D(A^{1/2})$ in dimension $d\geq 2$. Their proof relies on the theory of normal forms for Hamiltonian systems, and the stronger result in dimension $d=1$ depends on the fact that the difference between any two different eigenvalues of the operator $A$ is in that case bounded from below by a positive constant, and this guarantees a better cancelation of non-resonant modes.

The same estimate (\ref{th:ep4}) had already been obtained independently by R.~Manfrin in the early 2000s. Manfrin's result, that was reported in some conference but never published, was proved by a completely different technique, and it was valid for the abstract equation (\ref{K:eqn}) with $m$ of class $C^{2}$, but again with the smallness condition in $D(A)\times D(A^{1/2})$. As far as we know, it is not clear how to extend Manfrin's technique to less regular data, even under suitable assumptions on the eigenvalues of $A$, and therefore the one dimensional case by Baldi and Haus is for the time being the only example of estimate (\ref{th:ep4}) below the regularity threshold of $D(A)\times D(A^{1/2})$.

More recently, P.~Baldi and E.~Haus~\cite{2022-SIAM-BalHau} were able to obtain the even better estimate
\begin{equation}
T_{\ep}\geq\frac{c_{2}}{\ep^{6}},
\nonumber
\end{equation}
again by relying on the theory of normal forms, but in this case the restrictions on initial data are rather severe.

\paragraph{\textmd{\textit{Our contribution}}}

The question that we asked ourselves was the following.
\begin{quote}
Assume that $\{(u_{0\ep},u_{1\ep})\}$ is a family of initial data that converges in some sense to some limiting initial datum $(u_{0},u_{1})$, and assume that equation (\ref{K:eqn}) admits a global-in-time solution with initial datum $(u_{0},u_{1})$. Can we conclude that the life span $T_{\ep}$ of solutions with initial data $(u_{0\ep},u_{1\ep})$ tends to $+\infty$ as $\ep\to 0^{+}$?
\end{quote}

In this paper we give a positive answer to this question.
\begin{itemize}

\item  From the qualitative point of view, in Theorem~\ref{thm:lsc} we prove more generally that the life span of solutions (which is either a positive real number or $+\infty$) is lower semicontinuous with respect to convergence of initial data in $D(A^{3/4})\times D(A^{1/4})$, and that solutions depend continuously on initial data in the longest possible time interval. In particular, if for some reason the limiting initial datum $(u_{0},u_{1})$ originates a global solution, then the life span of solutions with approximating initial data tends to $+\infty$ (see Corollary~\ref{cor:AGE}).

This improves the classical result (see for example~\cite[Theorem~2.1]{1996-TAMS-AroPan}) according to which the life span of solutions is locally bounded from below in $D(A^{3/4})\times D(A^{1/4})$, and solutions depend continuously on initial data in some common time interval (but not necessarily in the longest possible time interval).

\item  From the quantitative point of view, in Theorem~\ref{thm:AGE} we provide estimates from below for $T_{\ep}$ when $(u_{0},u_{1})$ belongs to three special classes of initial data for which global existence is known, namely data that are finite linear combinations of eigenvectors of $A$, analytic data, and quasi-analytic data.

\end{itemize}

The quantitative estimates are rather weak, for example in the classical quasi-analytic case they involve three nested logarithms, but we point out that in this result we have no smallness assumptions on $(u_{0\ep},u_{1\ep})$. 

From the technical point of view, our estimates from below for $T_{\ep}$ are based on some estimates from above for the growth in time of the energy of the solution with the limiting initial condition $(u_{0},u_{1})$ (see Theorem~\ref{thm:growth}). These growth estimates are a by-product of the classical global existence results in analytic and quasi-analytic classes, but probably they had not been stated or proved explicitly before. 

Our technique, when applied to the null solution, provides the basic estimate (\ref{th:ep2}) (see Remark~\ref{rmk:basic}), and therefore we can not exclude that more refined techniques can lead to longer life spans also in this more general framework. The race for better estimates in on.

\paragraph{\textmd{\textit{Structure of the paper}}}

This paper is organized as follows. In section~\ref{sec:statements} we fix the functional setting and we state our main results concerning the lower semicontinuity of the life span, the energy estimates for global solutions, and the qualitative and quantitative almost global existence. In section~\ref{sec:quantitative} we concentrate the quantitative estimates that represent the technical core of the paper. In section~\ref{sec:proofs} we prove the main results. 

\setcounter{equation}{0}
\section{Statements}\label{sec:statements}

Let us start by clarifying the functional setting. In this paper we assume that
\begin{list}
{}{\setlength{\leftmargin}{4em}\setlength{\labelwidth}{4em}}

\item[(Hp-$H$)] $H$ is a real Hilbert space;

\item[(Hp-$A$)] $A$ is a self-adjoint operator on $H$ with domain $D(A)$, and there exist an orthonormal basis $\{e_{k}\}_{k\geq 1}$ of $H$, and a sequence $\{\lambda_{k}\}$ of nonnegative real numbers such that 
\begin{equation}
Ae_{k}=\lambda_{k}^{2}e_{k}
\qquad
\forall k\geq 1;
\nonumber
\end{equation}

\item[(Hp-$m$)] $m:[0,+\infty)\to\re$ is a function of class $C^{1}$ that satisfies the strict hyperbolicity assumption (\ref{hp:m-sh}).

\end{list}

The theory can be extended to more general cases that require only rather standard technical adjustments without introducing new ideas. For example, concerning the operator $A$, it is enough to assume that it is unitary equivalent to a multiplication operator on some $L^{2}$ space (see~\cite[Remark~2.7]{gg:EnergySpaceSG} for more details); concerning the nonlinearity $m$, one can replace the $C^{1}$ regularity by Lipschitz regularity.

Let us recall the classical local existence result (see~\cite[Theorem~2.1]{1996-TAMS-AroPan}).

\begin{thmbibl}[Local-in-time existence]\label{thmbibl:loc-ex}

Let us assume that $H$, $A$, and $m$ satisfy the hypotheses stated at the beginning of section~\ref{sec:statements}.

Then the following statements hold true.
\begin{enumerate}
\renewcommand{\labelenumi}{(\arabic{enumi})}

\item  \emph{(Local-in-time existence)}. For every $(u_{0},u_{1})\in D(A^{3/4})\times D(A^{1/4})$ there exists a real number $T>0$ such that  problem (\ref{K:eqn})--(\ref{K:data}) has a solution in the class (\ref{defn:strong-sol}).

\item  \emph{(Uniqueness)}. The solution is unique among solutions with the regularity  (\ref{defn:strong-sol}).

\item  \emph{(Energy equality)}. If we set
\begin{equation}
M(\sigma):=\int_{0}^{\sigma}m(s)\,ds
\qquad
\forall\sigma\geq 0,
\label{defn:M}
\end{equation}
then the classical Hamiltonian defined by
\begin{equation}
H(t):=|u'(t)|^{2}+M\left(|A^{1/2}u(t)|^{2}\right)
\label{defn:ham}
\end{equation}
is constant along trajectories, namely it satisfies the energy equality
\begin{equation}
|u'(t)|^{2}+M\left(|A^{1/2}u(t)|^{2}\right)=|u_{1}|^{2}+M\left(|A^{1/2}u_{0}|^{2}\right)
\qquad
\forall t\in[0,T].
\nonumber
\end{equation}

\item  \emph{(Propagation of regularity)}. If in addition $(u_{0},u_{1})\in D(A^{\alpha+3/4})\times D(A^{\alpha+1/4})$ for some real number $\alpha\geq 0$, then the solution belongs to
\begin{equation}
C^{0}\left([0,T],D(A^{\alpha+3/4})\right)\cap C^{1}\left([0,T],D(A^{\alpha+1/4})\strut\right).
\nonumber
\end{equation}

\item  \emph{(Continuation and alternative)}. The solution can be continued to a solution defined in some maximal interval $[0,T_{\mathrm{max}})$, where either $T_{\mathrm{max}}=+\infty$ or
\begin{equation}
\limsup_{t\to T_{\mathrm{max}}^{-}}\left(|A^{1/4}u'(t)|^{2}+|A^{3/4}u(t)|^{2}\right)=+\infty.
\label{th:alternative}
\end{equation} 

\end{enumerate}

\end{thmbibl}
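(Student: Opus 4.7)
The plan is to establish all five statements through the classical linearization plus fixed-point scheme for quasilinear hyperbolic equations, with the spectral decomposition in the basis $\{e_{k}\}$ providing the required estimates for the linearized problem. The key building block is the following: given a $C^{1}$ function $c:[0,T]\to[\nu_{0},+\infty)$, the linear equation $w''+c(t)Aw=0$ with initial data $(u_{0},u_{1})\in D(A^{3/4})\times D(A^{1/4})$ has a unique solution in the space (\ref{defn:strong-sol}), constructed componentwise by solving the scalar ODEs $\hat w_{k}''+c(t)\lambda_{k}^{2}\hat w_{k}=0$ in the basis $\{e_{k}\}$. Differentiating the quantity $E(t):=|A^{1/4}w'(t)|^{2}+c(t)|A^{3/4}w(t)|^{2}$ one finds $E'(t)=c'(t)|A^{3/4}w(t)|^{2}$, and Gronwall gives the fundamental estimate
\[
E(t)\leq E(0)\cdot\exp\!\left(\int_{0}^{t}\frac{|c'(s)|}{c(s)}\,ds\right),
\]
together with a parallel estimate for the difference of two linear solutions corresponding to two different coefficients.

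For statement~(1), I would consider the map $\Phi:v\mapsto w$ on a suitable closed ball $B$ in the space (\ref{defn:strong-sol}), where $w$ solves the linear equation with coefficient $c_{v}(t):=m(|A^{1/2}v(t)|^{2})$. A direct computation using the symmetry of $A$ gives
\[
c_{v}'(t)=2m'\!\bigl(|A^{1/2}v(t)|^{2}\bigr)\,\langle A^{3/4}v(t),A^{1/4}v'(t)\rangle,
\]
so $|c_{v}'(t)|$ is controlled precisely by the norms entering $E(t)$. The displayed energy estimate then shows that $\Phi$ maps $B$ into itself for $T$ small, and the companion difference estimate makes $\Phi$ a contraction after possibly shrinking $T$ further, yielding the desired fixed point. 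Uniqueness (statement~(2)) follows by applying the difference estimate to two solutions in (\ref{defn:strong-sol}) with the same data. For the energy equality (statement~(3)), one verifies the identity at the linearized level by a direct Fourier computation and then passes to the limit along the contraction iteration.

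Propagation of regularity (statement~(4)) is obtained by running the very same linear estimate with the already constructed $u$ in the role of the coefficient, but now in the stronger space $C^{0}([0,T],D(A^{\alpha+3/4}))\cap C^{1}([0,T],D(A^{\alpha+1/4}))$; since the estimate is linear in this stronger norm, no further shrinking of $T$ is needed, and the new space is propagated on the same interval. Statement~(5) is the usual maximal interval argument: if the quantity inside the $\limsup$ in (\ref{th:alternative}) were bounded on $[0,T_{\mathrm{max}})$, then the lower bound on the life span coming from statement~(1) depends only on that bound, on $\nu_{0}$ and on the behavior of $m$, so one could restart the local problem uniformly near $T_{\mathrm{max}}$ and extend the solution past it, contradicting maximality.

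The main technical obstacle is exactly the balance between the regularity of $v$ and the regularity of the coefficient $c_{v}$: the formula for $c_{v}'(t)$ pairs $A^{3/4}v$ with $A^{1/4}v'$, and this closes precisely in the space (\ref{defn:strong-sol}). Below the threshold $D(A^{3/4})\times D(A^{1/4})$ the analogous expression would require $A^{1/2}v'$ to make sense, the scheme collapses, and this is the structural reason why local existence in the energy space $D(A^{1/2})\times H$ remains open, as already highlighted in the introduction.
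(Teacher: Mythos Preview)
Your outline is a reasonable sketch of the standard linearization-plus-fixed-point proof of this classical result, but there is nothing in the paper to compare it against: Theorem~\ref{thmbibl:loc-ex} is not proved in the paper at all. It is stated as a background result and attributed to \cite[Theorem~2.1]{1996-TAMS-AroPan} (note that it is labeled \texttt{thmbibl}, the environment the paper reserves for results quoted from the literature). The authors simply recall the statement because they need the energy equality, the propagation of regularity, and especially the continuation criterion~(\ref{th:alternative}) as tools in the proofs of their own Theorems~\ref{thm:lsc}, \ref{thm:growth}, and~\ref{thm:AGE}.

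So your proposal is neither right nor wrong relative to the paper's proof --- there is no paper's proof. If the goal is to supply one, your sketch is along the correct lines and matches the approach in Arosio--Panizzi, though as written it is only an outline: the contraction step in particular requires a careful choice of the metric (typically a weaker norm than the one defining the ball $B$), and you have not spelled that out.
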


The last statement motivates the following notion.

\begin{defn}[Life span]
\begin{em}

For every $(u_{0},u_{1})\in D(A^{3/4})\times D(A^{1/4})$ we define $\LS(u_{0},u_{1})$ as the \emph{life span} of the solution to problem (\ref{K:eqn})--(\ref{K:data}), namely the value $T_{\mathrm{max}}$ that appears in statement~(5) of Theorem~\ref{thmbibl:loc-ex}. The life span is either a positive real number or $+\infty$.

\end{em}
\end{defn}

For every pair $(z_{0},z_{1})\in D(A^{3/4})\times D(A^{1/4})$ we define
\begin{equation}
E(z_{0},z_{1}):=|z_{1}|^{2}+|A^{1/4}z_{1}|^{2}+|A^{1/2}z_{0}|^{2}+|A^{3/4}z_{0}|^{2},
\nonumber
\end{equation}
and for every function $z$ in the space (\ref{defn:strong-sol}) we set
\begin{equation}
E_{z}(t):=E(z(t),z'(t))=|z'(t)|^{2}+|A^{1/4}z'(t)|^{2}+|A^{1/2}z(t)|^{2}+|A^{3/4}z(t)|^{2}.
\label{defn:E(t)}
\end{equation}

We are now ready to state the first result of this paper.


\begin{thm}[Lower semicontinuity of the life span and continuous dependence on initial data on the whole life span]\label{thm:lsc}

Let us assume that $H$, $A$, and $m$ satisfy the hypotheses stated at the beginning of section~\ref{sec:statements}.

Let $(u_{0\ep},u_{1\ep})\to (u_{0},u_{1})$ in $D(A^{3/4})\times D(A^{1/4})$ be a converging family of initial data, let $u_{\ep}(t)$ and $u(t)$ denote the corresponding solutions to (\ref{K:eqn}), and let $\LS(u_{0\ep},u_{1\ep})$ and $\LS(u_{0},u_{1})$ denote the life span of these solutions (possibly equal to $+\infty$).

Then the following statements hold true.
\begin{enumerate}
\renewcommand{\labelenumi}{(\arabic{enumi})}

\item  \emph{(Lower semicontinuity of the life span).} It turns out that 
\begin{equation}
\liminf_{\ep\to 0^{+}}\LS(u_{0\ep},u_{1\ep})\geq\LS(u_{0},u_{1}).
\label{th:LS-lsc}
\end{equation}

\item  \emph{(Continuous dependence on the initial condition).} For every $T<\LS(u_{0},u_{1})$ it turns out that $u_{\ep}(t)\to u(t)$ in the sense that
\begin{equation}
E_{(u_{\ep}-u)}(t)\to 0
\qquad
\text{uniformly in }[0,T],
\label{th:u-uep-noreg}
\end{equation}
where $E_{(u_{\ep}-u)}(t)$ is defined according to (\ref{defn:E(t)}). 

\item  \emph{(``Lipschitz like'' convergence rate when $(u_{0},u_{1})$ is more regular).} Let us assume in addition that $(u_{0},u_{1})\in D(A^{5/4})\times D(A^{3/4})$. 

Then for every $T<\LS(u_{0},u_{1})$ there exists a constant $\Gamma$ such that
\begin{equation}
E_{(u_{\ep}-u)}(t)\leq\Gamma\cdot E_{(u_{\ep}-u)}(0)
\qquad
\forall t\in[0,T].
\label{th:u-uep-reg}
\end{equation}

\end{enumerate}

\end{thm}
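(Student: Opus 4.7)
The plan is to prove statement (3) first by a direct energy estimate that exploits the extra regularity of $u$, and then to deduce statements (1) and (2) by iterating the classical local existence and continuous dependence (Theorem~\ref{thmbibl:loc-ex}) along the compact trajectory of $u$ in $D(A^{3/4}) \times D(A^{1/4})$. Statement (3) is not strong enough by itself to yield (1) and (2) by mere approximation via regular initial data, but the iteration along the trajectory bypasses this difficulty.

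For statement (3), I set $v_\ep := u_\ep - u$, which solves the linearized equation
$$v_\ep'' + c_\ep(t) A v_\ep = \bigl[c(t) - c_\ep(t)\bigr] A u,$$
with $c(t) := m(|A^{1/2}u(t)|^2)$ and $c_\ep(t) := m(|A^{1/2}u_\ep(t)|^2)$, and I introduce the energy
$$\mathcal{E}_\ep(t) := |v_\ep'(t)|^2 + |A^{1/4}v_\ep'(t)|^2 + c_\ep(t)\bigl(|A^{1/2}v_\ep(t)|^2 + |A^{3/4}v_\ep(t)|^2\bigr),$$
which is equivalent to $E_{v_\ep}(t)$ by the strict hyperbolicity assumption (\ref{hp:m-sh}). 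Differentiating $\mathcal{E}_\ep$ and using the equation, the mixed terms cancel by self-adjointness of $A$, leaving a source contribution $2\langle f_\ep, v_\ep'\rangle + 2\langle A^{1/4}f_\ep, A^{1/4}v_\ep'\rangle$ with $f_\ep := (c - c_\ep) A u$, plus the coefficient-variation term $c_\ep'\bigl(|A^{1/2}v_\ep|^2 + |A^{3/4}v_\ep|^2\bigr)$. A continuity/bootstrap argument confines $E_{u_\ep}$ to a ball of radius twice $\sup_{[0,T]} E_u$ on a maximal subinterval $[0, T^\ast]$; on this subinterval, Lipschitz continuity of $m$ on bounded sets gives $|c - c_\ep| \leq C_1 |A^{1/2}v_\ep|$ and $|c_\ep'| \leq C_2$. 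The decisive input is the hypothesis $(u_0, u_1) \in D(A^{5/4}) \times D(A^{3/4})$: by propagation of regularity (statement (4) of Theorem~\ref{thmbibl:loc-ex}), $|A^{5/4}u(t)|$ is uniformly bounded on $[0, T]$, so that both $|f_\ep|$ and $|A^{1/4}f_\ep|$ are dominated by a constant multiple of $|A^{1/2}v_\ep|$. The estimate then closes as $\mathcal{E}_\ep'(t) \leq C_\ast \mathcal{E}_\ep(t)$, and Gronwall combined with the smallness of $\mathcal{E}_\ep(0)$ forces $E_{u_\ep}$ to remain strictly inside its bootstrap ball, so $T^\ast = T$ and (\ref{th:u-uep-reg}) follows.

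For statements (1) and (2) the regularity of $u$ is no longer sufficient to close the above (the term $A^{1/4}f_\ep$ requires $|A^{5/4}u|$), so I proceed iteratively. Since $u \in C^0([0, T], D(A^{3/4})) \cap C^1([0, T], D(A^{1/4}))$ for every $T < \LS(u_0, u_1)$, the set $K := \{(u(t), u'(t)) : t \in [0, T]\}$ is compact in $D(A^{3/4}) \times D(A^{1/4})$. Inspecting the fixed-point construction of local solutions in \cite{1996-TAMS-AroPan} one extracts uniform constants $\tau > 0$ and $\rho > 0$, and a modulus of continuity $\omega$, depending only on a bounded neighborhood of $K$, such that any initial datum within $E$-distance $\rho$ from some point of $K$ generates a solution of life span at least $\tau$, with continuous dependence in $E$-norm governed by $\omega$. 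Partitioning $[0, T]$ into at most $\lceil T/\tau \rceil$ subintervals and iterating this local continuous dependence produces a finite chain of smallness conditions: provided $E(u_{0\ep} - u_0, u_{1\ep} - u_1)$ is small enough, the bound survives all iterations, so $u_\ep$ exists on $[0, T]$ and $E_{(u_\ep - u)}(t) \to 0$ uniformly on $[0, T]$ as $\ep \to 0^+$. Since $T < \LS(u_0, u_1)$ is arbitrary, taking the supremum over $T$ yields both (\ref{th:LS-lsc}) and (\ref{th:u-uep-noreg}).

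The main obstacle is extracting the uniform-on-compact-sets version of classical continuous dependence needed for the iteration in (1) and (2): it is not stated as such in Theorem~\ref{thmbibl:loc-ex}, but it is inherent to the quantitative proof in \cite{1996-TAMS-AroPan}, where both the life span and the continuous-dependence modulus depend only on a bound for $E$ on the initial datum. A secondary technical point in (3) is organizing the bootstrap so that closure inside the a priori ball requires only smallness of $\mathcal{E}_\ep(0)$, and not of $E_{u_\ep}(0)$ itself, the latter being automatic from the convergence of the initial data.
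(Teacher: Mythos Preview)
Your argument for statement~(3) is essentially the paper's: it is the energy $F(t)$ of Proposition~\ref{prop:quant-wp}(1), combined with the bootstrap of Proposition~\ref{prop:loc-quant}(1), and it closes precisely because propagation of regularity bounds $|A^{5/4}u(t)|$ on $[0,T]$.

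For statements~(1) and~(2), however, you take a genuinely different route from the paper. The paper does \emph{not} iterate the black-box continuous dependence of~\cite{1996-TAMS-AroPan}; instead it develops a direct quantitative estimate (Proposition~\ref{prop:quant-wp}(2) and Proposition~\ref{prop:loc-quant}(2)) via a low/high frequency splitting of $u$. The idea is to introduce a threshold $\lambda$, treat the low-frequency part $u_{\lambda,-}$ exactly as in the regular case (since $|A^{5/4}u_{\lambda,-}|$ is always finite, call it $R_{2,\lambda}$), and control the high-frequency remainder by the tail $E_{u}^{\lambda,+}(0)$, which is independent of $\ep$ and tends to~$0$ as $\lambda\to+\infty$. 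One then lets $\ep\to 0^{+}$ first and $\lambda\to+\infty$ afterwards. Your compactness-plus-iteration argument is correct and arguably more elementary for Theorem~\ref{thm:lsc} in isolation, but it yields no explicit constants; the paper's frequency decomposition produces estimates with explicit dependence on $R_{0},R_{1},R_{2,\lambda}$, and this quantitative control is exactly what is reused in the proof of Theorem~\ref{thm:AGE}. A minor point: your iteration does not actually need the uniform modulus $\omega$ you invoke; since the partition of $[0,T]$ has finitely many nodes, pointwise continuity of the solution map at each $(u(k\tau),u'(k\tau))$ --- which is what \cite[Theorem~2.1]{1996-TAMS-AroPan} states --- already suffices.
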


Theorem~\ref{thm:lsc} above implies the following almost global existence result, which we call qualitative because it comes with no estimate on the life spans, but also no special assumption on the limiting initial condition (we just assume that for some reason it originates a global solution).

\begin{cor}[Qualitative almost global existence]\label{cor:AGE}

Let us assume that $H$, $A$, and $m$ satisfy the hypotheses stated at the beginning of section~\ref{sec:statements}.

Let us assume that, for some initial condition $(u_{0},u_{1})\in D(A^{3/4})\times D(A^{1/4})$, problem (\ref{K:eqn})--(\ref{K:data}) admits a global solution.

Then for every family $\{(u_{0\ep},u_{1\ep})\}\subseteq D(A^{3/4})\times D(A^{1/4})$ of initial data that converges to $(u_{0},u_{1})$ in the same space it turns out that
\begin{equation}
\lim_{\ep\to 0^{+}}\LS(u_{0\ep},u_{1\ep})=+\infty,
\nonumber
\end{equation}
and $E_{(u_{\ep}-u)}(t)\to 0$ uniformly on all bounded time intervals.
\end{cor}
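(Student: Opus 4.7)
The plan is to read this corollary as an immediate consequence of Theorem~\ref{thm:lsc}, which is the technical heart of the work. Since the limiting datum $(u_0,u_1)$ is assumed to originate a global solution, we have $\LS(u_0,u_1)=+\infty$. The approximating family $\{(u_{0\ep},u_{1\ep})\}$ satisfies the convergence hypothesis of Theorem~\ref{thm:lsc}, so both conclusions of that theorem apply.

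First, I would invoke the lower semicontinuity statement (\ref{th:LS-lsc}): since $\LS(u_0,u_1)=+\infty$, the inequality
\begin{equation}
\liminf_{\ep\to 0^{+}}\LS(u_{0\ep},u_{1\ep})\geq +\infty
\nonumber
\end{equation}
forces $\LS(u_{0\ep},u_{1\ep})\to +\infty$, which is the first assertion of the corollary. This is the key input, and it requires nothing beyond reading off the definition of the limit.

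Second, for the convergence statement, I would fix an arbitrary bounded time interval $[0,T]$ with $T>0$. Because $\LS(u_0,u_1)=+\infty>T$, the hypothesis of part~(2) of Theorem~\ref{thm:lsc} is satisfied, and (\ref{th:u-uep-noreg}) gives $E_{(u_\ep-u)}(t)\to 0$ uniformly on $[0,T]$. Since $T$ was arbitrary, this yields uniform convergence on every bounded subinterval of $[0,+\infty)$. Note that one has to be slightly careful here: for small $\ep$ the solution $u_\ep$ exists on $[0,T]$ precisely because of the first part of the argument (otherwise $E_{(u_\ep-u)}(t)$ is not even defined on the whole of $[0,T]$), but this is already encoded in the proof of Theorem~\ref{thm:lsc}.

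There is essentially no obstacle at this stage, since all the analytic work, namely propagating a uniform lower bound on the life span under convergence of the initial data and establishing uniform continuous dependence up to any $T<\LS(u_0,u_1)$, has already been handled in Theorem~\ref{thm:lsc}. The corollary is just the specialization of that statement to the case in which the limiting life span is infinite.
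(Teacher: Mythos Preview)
Your proposal is correct and matches the paper's approach exactly: the paper presents Corollary~\ref{cor:AGE} as an immediate consequence of Theorem~\ref{thm:lsc} without a separate written proof, and your argument simply spells out the two obvious specializations (statement~(1) with $\LS(u_0,u_1)=+\infty$ for the life span, statement~(2) for the uniform convergence on each bounded interval).
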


Now we recall the notion of generalized Sobolev-Gevrey spaces with respect to an operator.

\begin{defn}[Generalized Sobolev-Gevrey spaces]
\begin{em}

Let us assume that $H$ and $A$ satisfy the hypotheses stated at the beginning of section~\ref{sec:statements}. Let $\alpha$ be a nonnegative real number, and let $\varphi:[0,+\infty)\to\re$ be a function.

The space $\mathcal{G}_{\varphi,\alpha}(A)$ is the set of all vectors $z\in H$ such that
\begin{equation}
\sum_{k=1}^{\infty}\lambda_{k}^{4\alpha}\exp(\varphi(\lambda_{k}))\langle z,e_{k}\rangle^{2}<+\infty.
\label{defn:SG-spaces}
\end{equation}

\end{em}
\end{defn}

We observe that when $\varphi(\sigma)\equiv 0$ we obtain the usual ``Sobolev spaces'' $D(A^{\alpha})$, while in the case where $\alpha=0$ and $\varphi(\sigma)=r_{0}\sigma^{1/s}$ for some $r_{0}>0$ we obtain the usual ``Gevrey spaces'' of order $s$ with ``radius'' $r_{0}$. We also observe that the convergence of the series in (\ref{defn:SG-spaces}) is not affected by adding a constant to $\varphi$, and therefore in the sequel we always assume, without loss of generality, that $\varphi(0)=0$.

We are now ready to recall the classical result concerning global existence for quasi-analytic initial data. We state both the existence result (see~\cite[Theorem~2.1]{gg:K-Nishihara}), and the key estimate in the proof (see~\cite[section~3.2.3.]{gg:K-Nishihara}), because we need it in the sequel.

\begin{thmbibl}[Global existence in quasi-analytic classes]\label{thmbibl:glob-ex}

Let us assume that $H$, $A$, and $m$ satisfy the hypotheses stated at the beginning of section~\ref{sec:statements}. Let $\varphi:[0,+\infty)\to[0,+\infty)$ be an increasing continuous function such that $\varphi(0)=0$ and
\begin{equation}
\int_{1}^{+\infty}\frac{\varphi(\sigma)}{\sigma^{2}}\,d\sigma=+\infty.
\nonumber
\end{equation}

Then the following statements hold true.
\begin{enumerate}
\renewcommand{\labelenumi}{(\arabic{enumi})}

\item  \emph{(Global existence).} For every initial condition 
\begin{equation}
(u_{0},u_{1})\in\mathcal{G}_{\varphi,3/4}(A)\times\mathcal{G}_{\varphi,1/4}(A),
\label{hp:data-phi}
\end{equation}
problem (\ref{K:eqn})--(\ref{K:data}) admits a unique global solution, and this solution belongs to the space
\begin{equation}
C^{0}\left([0,+\infty),\mathcal{G}_{\varphi,3/4}(A)\right)\cap
C^{1}\left([0,+\infty),\mathcal{G}_{\varphi,1/4}(A)\right).
\nonumber
\end{equation}

\item  \emph{(Differential inequality for the energy).} Let us consider the corrected $\varphi$-energy defined by
\begin{equation}
F_{\varphi}(t):=\sum_{k=1}^{\infty}\max\{1,\lambda_{k}\}
\cdot a_{k}(t)\cdot
\exp(\varphi(\lambda_{k}))
\label{defn:F-phi}
\end{equation}
where
\begin{equation}
a_{k}(t):=\langle u'(t),e_{k}\rangle^{2}+
m\left(|A^{1/2}u(t)|^{2}\right)\cdot\lambda_{k}^{2}\langle u(t),e_{k}\rangle^{2}.
\nonumber
\end{equation}

If $|u_{1}|^{2}+|A^{1/2}u_{0}|^{2}>0$, then there exist real number $c_{0}$ and $c_{1}$ such that this energy satisfies the estimate
\begin{equation}
F_{\varphi}(t)\geq c_{1}>0
\qquad
\forall t\geq 0,
\nonumber
\end{equation}
and the differential inequality
\begin{equation}
F_{\varphi}'(t)\leq c_{0}F_{\varphi}(t)\cdot
\left\{1+\varphi^{-1}\left(
\log\frac{F_{\varphi}(t)}{c_{1}}\right)\right\}
\qquad
\forall t\geq 0.
\label{th:diff-ineq-F}
\end{equation}

\end{enumerate}

\end{thmbibl}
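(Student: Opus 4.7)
The plan is to establish statement~(2) first by a direct computation on the corrected energy $F_{\varphi}(t)$, and then to deduce statement~(1) by comparing $F_{\varphi}$ with the explicit ODE whose global existence is guaranteed by the quasi-analytic assumption on $\varphi$.

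For the differential inequality, I would expand the local solution provided by Theorem~\ref{thmbibl:loc-ex} in the eigenbasis, writing $u(t)=\sum_{k}u_{k}(t)e_{k}$ so that each component solves $u_{k}''+m(|A^{1/2}u|^{2})\lambda_{k}^{2}u_{k}=0$. A short computation then yields $a_{k}'(t)=m'(|A^{1/2}u|^{2})\cdot c(t)\cdot\lambda_{k}^{2}u_{k}^{2}$, where $c(t):=\tfrac{d}{dt}|A^{1/2}u(t)|^{2}=2\sum_{j}\lambda_{j}^{2}u_{j}u_{j}'$, since the two $m$-terms cancel against the contribution coming from $u_{k}''$. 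Summing against the weights $w_{k}:=\max\{1,\lambda_{k}\}\exp(\varphi(\lambda_{k}))$ gives the clean identity $F_{\varphi}'(t)=m'(|A^{1/2}u|^{2})\cdot c(t)\cdot\sum_{k}w_{k}\lambda_{k}^{2}u_{k}^{2}$. The strict hyperbolicity~(\ref{hp:m-sh}) together with the conservation of the Hamiltonian~(\ref{defn:ham}) bound $|A^{1/2}u|$ a priori, so $m$ and $m'$ are uniformly controlled along the trajectory, and the sum on the right is bounded by a constant multiple of $F_{\varphi}(t)/\nu_{0}$. The lower bound $F_{\varphi}\geq c_{1}>0$ follows from the strict positivity of $|u_{1}|^{2}+|A^{1/2}u_{0}|^{2}$ via the same conservation law.

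The crux of the argument, and in my view the main obstacle, is to estimate $|c(t)|$ in terms of $F_{\varphi}(t)$ only: the plain Cauchy--Schwarz inequality produces $|c(t)|\leq 2|A^{1/2}u|\cdot|A^{1/2}u'|$, but $|A^{1/2}u'|$ is not directly bounded by $F_{\varphi}$, whose weight contains only one factor of $\lambda_{k}$. The standard remedy is a frequency cut-off at some level $R>0$: for modes with $\lambda_{k}\leq R$ one replaces a factor $\lambda_{k}$ by $R$ and uses the Hamiltonian to control the remainder, while for modes with $\lambda_{k}>R$ one gains a factor $\exp(-\varphi(R))$ by dividing and multiplying by the weight. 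Adding the two contributions yields a bound of the form $|c(t)|\lesssim R+\exp(-\varphi(R))\cdot F_{\varphi}(t)$; the optimal choice $\varphi(R)\approx\log(F_{\varphi}(t)/c_{1})$, equivalent to $R\approx\varphi^{-1}(\log(F_{\varphi}(t)/c_{1}))$, reproduces exactly the right-hand side of~(\ref{th:diff-ineq-F}).

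To upgrade the differential inequality to global existence, I would invoke the continuation alternative (\ref{th:alternative}): the life span can be finite only if $|A^{1/4}u'|^{2}+|A^{3/4}u|^{2}$ blows up, and this quantity is dominated, up to constants, by $F_{\varphi}(t)/\nu_{0}$. It therefore suffices to show that the scalar ODE $y'=c_{0}\,y\,(1+\varphi^{-1}(\log(y/c_{1})))$ does not blow up in finite time, i.e.\ that $\int_{c_{1}}^{+\infty}\tfrac{dy}{y\,(1+\varphi^{-1}(\log(y/c_{1})))}=+\infty$. The substitution $\sigma:=\varphi^{-1}(\log(y/c_{1}))$ turns this integral into $\int_{0}^{+\infty}\tfrac{\varphi'(\sigma)}{1+\sigma}\,d\sigma$, which, via an integration by parts, is equivalent to the quasi-analytic hypothesis $\int_{1}^{+\infty}\varphi(\sigma)/\sigma^{2}\,d\sigma=+\infty$. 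Once $F_{\varphi}$ is shown to stay finite on every compact interval, the propagation of regularity in statement~(4) of Theorem~\ref{thmbibl:loc-ex} promotes the solution to an element of the claimed quasi-analytic class for all $t\geq 0$.
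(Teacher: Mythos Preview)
The paper does not provide a proof of this statement: Theorem~\ref{thmbibl:glob-ex} is quoted from the literature (see the references to \cite[Theorem~2.1]{gg:K-Nishihara} and \cite[section~3.2.3]{gg:K-Nishihara} immediately before the statement), and the paper uses only the conclusions~(\ref{defn:F-phi}) and~(\ref{th:diff-ineq-F}) as black boxes. So there is no ``paper's own proof'' against which to compare yours.

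That said, your sketch is correct and follows precisely the classical argument of the cited reference. The computation $a_{k}'(t)=m'(|A^{1/2}u|^{2})\,c(t)\,\lambda_{k}^{2}u_{k}^{2}$ is right, the frequency cut-off at level $R$ is the key trick (and your balancing $\varphi(R)\approx\log(F_{\varphi}/c_{1})$ is exactly how the factor $\varphi^{-1}(\log(F_{\varphi}/c_{1}))$ arises), and the Osgood-type non-blow-up criterion via the substitution $\sigma=\varphi^{-1}(\log(y/c_{1}))$ is the standard way to convert the quasi-analyticity hypothesis into global existence for the comparison ODE. Two small remarks: first, the differential inequality is established a priori only on the maximal interval $[0,T_{\mathrm{max}})$ of the local solution, and global existence then follows from the alternative~(\ref{th:alternative}); your text does this, but the phrasing ``establish statement~(2) first'' is slightly circular since~(\ref{th:diff-ineq-F}) is stated for all $t\geq 0$. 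Second, the final appeal to statement~(4) of Theorem~\ref{thmbibl:loc-ex} is unnecessary and a bit off target: that statement concerns propagation of Sobolev regularity $D(A^{\alpha+3/4})\times D(A^{\alpha+1/4})$, whereas what you need, membership in $\mathcal{G}_{\varphi,3/4}(A)\times\mathcal{G}_{\varphi,1/4}(A)$, is already encoded in the finiteness of $F_{\varphi}(t)$ that you have just proved.
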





The second result of this paper concerns energy estimates for global solutions to (\ref{K:eqn}) provided by Theorem~\ref{thmbibl:glob-ex} above, at least for the two main examples of weights $\varphi$ (but the method can be extended to more general choices).

\begin{thm}[Energy estimates for (quasi) analytic global solutions]\label{thm:growth}

Let us assume that $H$, $A$, and $m$ satisfy the hypotheses stated at the beginning of section~\ref{sec:statements}.

Then the following statements hold true.
\begin{enumerate}
\renewcommand{\labelenumi}{(\arabic{enumi})}

\item  \emph{(Analytic data).} Let $r_{0}$ be a positive real number. Let us consider the global solution to problem (\ref{K:eqn})--(\ref{K:data}) with initial data that satisfy (\ref{hp:data-phi}) with $\varphi(\sigma)=r_{0}\sigma$ for every $\sigma\geq 0$. 

Then there exists a positive real number $\beta_{1}$ such that the energy defined by (\ref{defn:F-phi}) satisfies the estimate
\begin{equation}
F_{\varphi}(t)\leq F_{\varphi}(0)\exp(\exp(\beta_{1}t))
\qquad
\forall t\geq 0.
\label{th:F-an}
\end{equation}

Moreover, for every real number $\alpha\geq 0$ there exist a positive real number $B_{1,\alpha}$ such that
\begin{equation}
|A^{\alpha}u'(t)|^{2}+|A^{\alpha+1/2}u(t)|^{2}\leq 
B_{1,\alpha}\exp(4\alpha\beta_{1}t)
\qquad
\forall t\geq 0.
\label{th:an-est}
\end{equation}

\item  \emph{(Classical quasi analytic data).} Let us consider the global solution to problem (\ref{K:eqn})--(\ref{K:data}) with initial data that satisfy (\ref{hp:data-phi}) with 
\begin{equation}
\varphi(\sigma):=\frac{\sigma}{\log(2+\sigma)}
\qquad
\forall\sigma\geq 0.
\label{defn:qa}
\end{equation}

Then there exists a positive real number $\beta_{2}$ such that the energy defined by (\ref{defn:F-phi}) satisfies the estimate
\begin{equation}
F_{\varphi}(t)\leq F_{\varphi}(0)\exp(\exp(\exp(\beta_{2}t)))
\qquad
\forall t\geq 0.
\label{th:F-qa}
\end{equation}

Moreover, for every real number $\alpha\geq 0$ there exist a positive real number $B_{2,\alpha}$ such that
\begin{equation}
|A^{\alpha}u'(t)|^{2}+|A^{\alpha+1/2}u(t)|^{2}\leq 
B_{2,\alpha}\exp\left(\exp\left(\max\{2,4\alpha\}\beta_{2}t\strut\right)\right)
\qquad
\forall t\geq 0.
\label{th:qa-est}
\end{equation}

\end{enumerate}

\end{thm}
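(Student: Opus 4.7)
My plan is to deduce everything from the differential inequality (\ref{th:diff-ineq-F}). The growth bounds for $F_\varphi$ come from elementary ODE analysis, and the Sobolev-type estimates (\ref{th:an-est})--(\ref{th:qa-est}) come from a time-dependent frequency splitting that trades a low-frequency polynomial loss against high-frequency decay of the weight $e^{\varphi(\lambda_k)}$ carried by $F_\varphi$.

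\textbf{Growth of $F_\varphi$.} In the analytic case $\varphi(\sigma)=r_0\sigma$, so $\varphi^{-1}(y)=y/r_0$ and (\ref{th:diff-ineq-F}) reduces to $F_\varphi' \leq c_0 F_\varphi(1 + r_0^{-1}\log(F_\varphi/c_1))$. Setting $G := 1 + r_0^{-1}\log(F_\varphi/c_1)$ kills one logarithm: $G' = F_\varphi'/(r_0 F_\varphi) \leq (c_0/r_0)\,G$, so $G$ grows at most exponentially, and re-exponentiating gives the double-exponential bound (\ref{th:F-an}). In the quasi-analytic case $\varphi^{-1}(y) \leq C\,y \log(2+y)$, so (\ref{th:diff-ineq-F}) becomes essentially $F_\varphi' \leq c F_\varphi \log F_\varphi \cdot \log\log F_\varphi$. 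Two successive substitutions $G := \log(F_\varphi/c_1)$ and then $H := \log G$ reduce this to $H' \leq cH$, yielding the triple-exponential bound (\ref{th:F-qa}) after two re-exponentiations.

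\textbf{Sobolev estimates by splitting.} The definition of $a_k(t)$ together with strict hyperbolicity (\ref{hp:m-sh}) yields
\[
|A^\alpha u'(t)|^2 + \nu_0\,|A^{\alpha+1/2} u(t)|^2 \leq \sum_k \lambda_k^{4\alpha} a_k(t).
\]
For a threshold $R = R(t) \geq 1$ to be chosen later, I split the sum at $\lambda_k = R$:
\[
\sum_k \lambda_k^{4\alpha} a_k(t) \leq R^{4\alpha}\sum_{\lambda_k \leq R} a_k(t) + \sup_{\lambda > R}\frac{\lambda^{4\alpha}}{\max\{1,\lambda\}\,e^{\varphi(\lambda)}} \cdot F_\varphi(t).
\]
Conservation of the Hamiltonian (\ref{defn:ham}) and (\ref{hp:m-sh}) bound $|A^{1/2}u(t)|^2$, hence by continuity of $m$ also $\sum_k a_k(t)$, by a time-independent constant $C_0$. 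In the analytic case the supremum is $\leq e^{-r_0 R/2}$ for $R$ large; in the quasi-analytic case it is $\leq \exp(-R/(2\log(2+R)))$. Equating the two contributions prescribes
\[
R(t) \sim \frac{2}{r_0}\log F_\varphi(t) \qquad\text{or}\qquad R(t) \sim 2\log F_\varphi(t) \cdot \log\log F_\varphi(t),
\]
respectively. Plugging in (\ref{th:F-an})--(\ref{th:F-qa}) and reading off $R(t)^{4\alpha}$ yields (\ref{th:an-est}) and (\ref{th:qa-est}); the factor $\max\{2, 4\alpha\}$ in the quasi-analytic exponent arises when rewriting $\exp(4\alpha \exp(\beta_2 t))$ in the uniform form $\exp(\exp(\cdot\,\beta_2 t))$, which forces a factor at least $2$ to absorb the $\log\log$ correction when $\alpha$ is small.

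The main obstacle is the splitting step: extracting only single (resp.\ double) exponential growth for Sobolev norms out of the double (resp.\ triple) exponential growth of the full weighted energy $F_\varphi$ is possible precisely because the cutoff $R(t)$ can be chosen in a time-dependent way, and it requires careful bookkeeping of the logarithms produced by inverting $\varphi$. The ODE step is essentially mechanical once the differential inequality (\ref{th:diff-ineq-F}) is in hand.
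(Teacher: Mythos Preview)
Your proposal is correct and follows essentially the same route as the paper: integrate the differential inequality (\ref{th:diff-ineq-F}) to obtain the double/triple exponential bounds on $F_\varphi$, then recover the Sobolev norms by a frequency splitting at a time-dependent threshold that balances the weight $e^{\varphi(\lambda_k)}$ against polynomial growth. The paper packages your splitting step as a separate interpolation lemma (Lemma~\ref{lemma:interpolation}, with threshold $\varphi^{-1}(2\log(F/E))$) and handles the ODE via a sub/supersolution comparison rather than your explicit substitutions, but the underlying argument is the same.
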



The third and last result of this paper is an estimate from below for the life span of solutions with initial data that are close to some special classes of data that originate global solutions.

\begin{thm}[Quantitative almost global existence in special cases]\label{thm:AGE}

Let us assume that $H$, $A$, and $m$ satisfy the hypotheses stated at the beginning of section~\ref{sec:statements}.

Let $(u_{0},u_{1})\in D(A^{3/4})\times D(A^{1/4})$, and for every $\ep>0$ let $(u_{0\ep},u_{1\ep})\in D(A^{3/4})\times D(A^{1/4})$ be such that
\begin{equation}
|u_{1}-u_{1\ep}|^{2}+
|A^{1/4}(u_{1}-u_{1\ep})|^{2}+
|A^{1/2}(u_{0}-u_{0\ep})|^{2}+
|A^{3/4}(u_{0}-u_{0\ep})|^{2}\leq
\ep^{2}.
\nonumber
\end{equation}

Let $\LS(u_{0},u_{1})$ and $\LS(u_{0\ep},u_{1\ep})$ denote the life span (possibly equal to $+\infty$) of the corresponding solutions to (\ref{K:eqn}).

Then the following estimates hold true.
\begin{enumerate}
\renewcommand{\labelenumi}{(\arabic{enumi})}

\item  \emph{(Finite dimensional initial data).} Let us assume that $u_{0}$ and $u_{1}$ are finite linear combinations of eigenvalues of $A$.

Then there exist positive real numbers $\gamma_{0}$ and $\ep_{0}$ such that
\begin{equation}
\LS(u_{0\ep},u_{1\ep})\geq\frac{1}{\gamma_{0}}\log|\ep|
\qquad
\forall\ep\in(0,\ep_{0}).
\nonumber
\end{equation}

\item  \emph{(Analytic initial data).} Let $r_{0}$ be a positive real number. Let us assume that $(u_{0},u_{1})$ satisfy (\ref{hp:data-phi}) with $\varphi(\sigma):=r_{0}\sigma$ for all $\sigma\geq 0$.

Then there exist positive real numbers $\gamma_{1}$ and $\ep_{1}$ such that
\begin{equation}
\LS(u_{0\ep},u_{1\ep})\geq\frac{1}{\gamma_{1}}\log(|\log\ep|)
\qquad
\forall\ep\in(0,\ep_{1}).
\nonumber
\end{equation}

\item  \emph{(Classical quasi-analytic initial data).} Let us assume that $(u_{0},u_{1})$ satisfy (\ref{hp:data-phi}) with $\varphi$ defined by (\ref{defn:qa}).

Then there exist positive real numbers $\gamma_{2}$ and $\ep_{2}$ such that
\begin{equation}
\LS(u_{0\ep},u_{1\ep})\geq\frac{1}{\gamma_{2}}\log(\log(|\log\ep|))
\qquad
\forall\ep\in(0,\ep_{2}).
\nonumber
\end{equation}

\end{enumerate}

\end{thm}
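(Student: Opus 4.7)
The plan is to upgrade the qualitative continuous dependence of Theorem~\ref{thm:lsc} into a quantitative statement of the form
\begin{equation*}
E_{(u_\ep-u)}(t) \leq \ep^2\,\Gamma(T) \qquad \forall t \in [0,T],
\end{equation*}
valid on any interval $[0,T]$ where the reference global solution $u$ is defined, with $\Gamma(T)$ depending explicitly on the growth of $|A^{3/4}u|$ on $[0,T]$. Once this is in hand, the continuation alternative (\ref{th:alternative}) guarantees $\LS(u_{0\ep},u_{1\ep}) \geq T$ as soon as $\ep^2\Gamma(T)$ stays below a fixed multiple of the appropriate energy of $u$. The three bounds in the statement then follow by plugging in the growth rates supplied by Theorem~\ref{thm:growth} and inverting in $T$.

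To produce the energy estimate I would set $w_\ep := u_\ep-u$ and write its equation as
\begin{equation*}
w_\ep'' + m\!\left(|A^{1/2}u_\ep|^2\right) A w_\ep = \left(m\!\left(|A^{1/2}u|^2\right) - m\!\left(|A^{1/2}u_\ep|^2\right)\right) Au,
\end{equation*}
with initial data of size $O(\ep)$ in $D(A^{3/4})\times D(A^{1/4})$. I would then differentiate the higher-order perturbation energy
\begin{equation*}
G_\ep(t) := |w_\ep'(t)|^2 + |A^{1/4}w_\ep'(t)|^2 + m\!\left(|A^{1/2}u_\ep(t)|^2\right)\!\left(|A^{1/2}w_\ep(t)|^2 + |A^{3/4}w_\ep(t)|^2\right),
\end{equation*}
exploiting strict hyperbolicity (\ref{hp:m-sh}), the $C^1$-regularity of $m$, and Cauchy--Schwarz to bound the time derivative by a Gronwall-type coefficient dominated by $1+E_u(t)$ under the bootstrap assumption $E_{u_\ep}(t) \leq 2E_u(t)+1$. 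Integrating should yield
\begin{equation*}
G_\ep(t) \leq C_0\,\ep^2\exp\!\left(C_1\int_0^t (1+E_u(s))\,ds\right),
\end{equation*}
and the bootstrap closes on $[0,T]$ as long as the right-hand side stays bounded by $E_u(T)+1$, at which point the alternative (\ref{th:alternative}) precludes blow-up before $T$.

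It then remains to specialize this threshold. In the finite dimensional case $u$ lies in a finite-dimensional invariant subspace and $E_u(s)$ is bounded on $[0,+\infty)$, so the exponent is linear in $T$ and the condition $\ep^2\exp(C_2 T) \leq 1$ gives $T\gtrsim |\log\ep|$, which yields~(1). For analytic data, Theorem~\ref{thm:growth}(1) with $\alpha=1/4$ gives $|A^{1/4}u'(s)|^2+|A^{3/4}u(s)|^2 \leq B\exp(\beta_1 s)$, so the exponent is of order $\exp(\beta_1 T)$ and $\ep^2\exp(C_3\exp(\beta_1 T))\leq 1$ produces $T\gtrsim \log|\log\ep|$, which yields~(2). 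For quasi-analytic data, (\ref{th:qa-est}) with $\alpha=1/4$ gives $E_u(s)\lesssim \exp(\exp(2\beta_2 s))$, hence the exponent is of order $\exp(\exp(2\beta_2 T))$ and $T\gtrsim \log\log|\log\ep|$, which yields~(3). The main obstacle lies in the Gronwall step itself: it must be carried out at exactly the regularity threshold $D(A^{3/4})\times D(A^{1/4})$ of the local existence theorem, where only the $C^1$-regularity of $m$ is available and where the coefficient $m(|A^{1/2}u_\ep|^2)$ is merely Lipschitz in time, so the naive trick of differentiating the equation in space is unavailable; executing this cleanly, presumably via the spectral-decomposition machinery developed in Section~\ref{sec:quantitative}, is the technical heart of the argument.
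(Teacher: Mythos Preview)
Your overall strategy is exactly the paper's: bootstrap plus Gronwall for $E_{(u_\ep-u)}$, then invert the smallness condition using the growth rates from Theorem~\ref{thm:growth}. But your accounting of the Gronwall coefficient is off, and this leads you to misidentify the main obstacle.

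When you differentiate $G_\ep$, the forcing term produces $\langle A^{1/4}w_\ep',A^{5/4}u\rangle$, so the coefficient is not dominated by $1+E_u(t)$ but by something like $1+E_u(t)+|A^{5/4}u(t)|^2$; this is precisely the constant $\Gamma_2$ in Proposition~\ref{prop:loc-quant}(1), which involves $R_2=\sup|A^{5/4}u|$. You cannot close the estimate with only $|A^{3/4}u|$ on the right, and in particular using Theorem~\ref{thm:growth} with $\alpha=1/4$ alone is insufficient. The fix is painless in the three cases at hand: the reference solution $u$, being finite-dimensional, analytic, or quasi-analytic, lies in $D(A^{5/4})$ for all time, and Theorem~\ref{thm:growth} with $\alpha=3/4$ bounds $|A^{5/4}u(t)|^2$ by the same type of growth (a constant, $\exp(3\beta_1 t)$, or $\exp(\exp(3\beta_2 t))$). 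The paper does exactly this: it invokes statement~(1) of Proposition~\ref{prop:loc-quant} (the \emph{more regular} case) with $R_1=R_2$ chosen from the $\alpha=1/4$ and $\alpha=3/4$ bounds simultaneously, and then balances $R_1\sim|\log\ep|^{1/4}$ against $T$.

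Consequently your closing paragraph inverts the difficulty. The spectral-decomposition machinery (statement~(2) of Propositions~\ref{prop:quant-wp} and~\ref{prop:loc-quant}) is \emph{not} used in the proof of Theorem~\ref{thm:AGE}; it is needed only for the qualitative Theorem~\ref{thm:lsc}, where the limiting $u$ genuinely has no extra regularity. Here the extra half-derivative on $u$ is available for free, and the whole argument runs through the straightforward regular case.
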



\setcounter{equation}{0}
\section{Quantitative estimates}\label{sec:quantitative}

This section is the technical core of this paper. The first result is the classical energy estimate for solutions of an abstract \emph{linear} wave equation with a time-dependent propagation speed.

\begin{lemma}\label{lemma:basic}

Let us assume that $H$, $A$, and $m$ satisfy the hypotheses stated at the beginning of section~\ref{sec:statements}.

Let $S_{0}$ be a positive real number, and let $c:[0,S_{0}]\to\re$ be a function of class $C^{1}$. Let us assume that there exists three real numbers $\nu_{0}$, $C_{0}$, $\Lambda_{0}$ such that
\begin{equation}
0<\nu_{0}\leq c(t)\leq C_{0}
\qquad
\forall t\in[0,S_{0}],
\label{hp:bound-c}
\end{equation}
and
\begin{equation}
|c'(t)|\leq\Lambda_{0}
\qquad
\forall t\in[0,S_{0}].
\nonumber
\end{equation}

Let $\alpha\geq 0$ be a real number, and let
\begin{equation}
z\in C^{1}\left([0,S_{0}],D(A^{\alpha})\right)\cap C^{0}\left([0,S_{0}],D(A^{\alpha+1/2})\right)
\nonumber
\end{equation}
be a function such that
\begin{equation}
z''(t)+c(t)Az(t)=0
\qquad
\forall t\in[0,S_{0}].
\nonumber
\end{equation}

Then for every $t\in[0,S_{0}]$ it turns out that
\begin{equation}
|A^{\alpha}z'(t)|^{2}+|A^{\alpha+1/2}z(t)|^{2}\leq
\left(|A^{\alpha}z'(0)|^{2}+|A^{\alpha+1/2}z(0)|^{2}\right)
\frac{\max\{1,C_{0}\}}{\min\{1,\nu_{0}\}}
\exp\left(\frac{\Lambda_{0}}{\nu_{0}}t\right).
\label{th:lemma-0}
\end{equation} 

\end{lemma}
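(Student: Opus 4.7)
The plan is to use the classical weighted energy for wave equations with variable speed, namely
\begin{equation}
E(t):=|A^{\alpha}z'(t)|^{2}+c(t)\,|A^{\alpha+1/2}z(t)|^{2},
\nonumber
\end{equation}
which is the natural Hamiltonian-like quantity associated to the linear equation $z''+c(t)Az=0$. The hypothesis $c(t)\in[\nu_{0},C_{0}]$ with $\nu_{0}>0$ ensures immediately the two-sided comparison
\begin{equation}
\min\{1,\nu_{0}\}\bigl(|A^{\alpha}z'(t)|^{2}+|A^{\alpha+1/2}z(t)|^{2}\bigr)\leq E(t)\leq\max\{1,C_{0}\}\bigl(|A^{\alpha}z'(t)|^{2}+|A^{\alpha+1/2}z(t)|^{2}\bigr),
\nonumber
\end{equation}
which will eventually produce the two factors $\max\{1,C_{0}\}$ and $\min\{1,\nu_{0}\}$ in (\ref{th:lemma-0}).

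Next I would differentiate $E(t)$ in time. Applying $A^{\alpha}$ to the equation yields $A^{\alpha}z''=-c(t)A^{\alpha+1}z$, so that, using self-adjointness of $A$ to rewrite $\langle A^{\alpha}z',A^{\alpha+1}z\rangle=\langle A^{\alpha+1/2}z',A^{\alpha+1/2}z\rangle$, the two terms $\pm 2c(t)\langle A^{\alpha+1/2}z',A^{\alpha+1/2}z\rangle$ cancel exactly and only the derivative of the coefficient survives:
\begin{equation}
E'(t)=c'(t)\,|A^{\alpha+1/2}z(t)|^{2}.
\nonumber
\end{equation}

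Here the bound on $c'$ together with the lower bound $c(t)\geq\nu_{0}$ gives
\begin{equation}
|E'(t)|\leq\Lambda_{0}\,|A^{\alpha+1/2}z(t)|^{2}\leq\frac{\Lambda_{0}}{\nu_{0}}\,c(t)\,|A^{\alpha+1/2}z(t)|^{2}\leq\frac{\Lambda_{0}}{\nu_{0}}\,E(t),
\nonumber
\end{equation}
and Gronwall's inequality then produces $E(t)\leq E(0)\exp(\Lambda_{0}t/\nu_{0})$. Combining this with the two-sided comparison above yields exactly (\ref{th:lemma-0}).

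There is essentially no obstacle in this argument: the only technical point to check is that $E(t)$ is actually of class $C^{1}$ on $[0,S_{0}]$, which follows from the regularity hypothesis $z\in C^{1}([0,S_{0}],D(A^{\alpha}))\cap C^{0}([0,S_{0}],D(A^{\alpha+1/2}))$ together with $c\in C^{1}$, so that all the scalar products above make sense and the integration by parts hidden in the self-adjointness identity is legitimate. Everything else is a routine computation.
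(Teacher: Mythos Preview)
Your proof is correct and follows essentially the same route as the paper: the weighted energy $E(t)=|A^{\alpha}z'(t)|^{2}+c(t)|A^{\alpha+1/2}z(t)|^{2}$, the cancellation leaving $E'(t)=c'(t)|A^{\alpha+1/2}z(t)|^{2}$, the bound $E'(t)\leq(\Lambda_{0}/\nu_{0})E(t)$, and the two-sided comparison with the unweighted energy. The only cosmetic difference is notation.
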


\begin{proof}

Let us consider the $\alpha$-energy
\begin{equation}
\mathcal{E}_{\alpha}(t):=|A^{\alpha}z'(t)|^{2}+|A^{\alpha+1/2}z(t)|^{2},
\nonumber
\end{equation}
and the modified $\alpha$-energy
\begin{equation}
\widehat{\mathcal{E}}_{\alpha}(t):=|A^{\alpha}z'(t)|^{2}+c(t)|A^{\alpha+1/2}z(t)|^{2}.
\nonumber
\end{equation}

From the bounds in (\ref{hp:bound-c}) it follows that the two energies are equivalent in the sense that
\begin{equation}
\min\{1,\nu_{0}\}\mathcal{E}_{\alpha}(t)\leq
\widehat{\mathcal{E}}_{\alpha}(t)\leq
\max\{1,C_{0}\}\mathcal{E}_{\alpha}(t)
\qquad
\forall t\in[0,S_{0}].
\label{est:G-equiv}
\end{equation}

The time-derivative of the modified energy is equal to
\begin{equation}
\widehat{\mathcal{E}}_{\alpha}'(t)=
c'(t)|A^{\alpha+1/2}z(t)|^{2}=
\frac{c'(t)}{c(t)}\cdot c(t)|A^{\alpha+1/2}z(t)|^{2}\leq
\frac{\Lambda_{0}}{\nu_{0}}\cdot\widehat{\mathcal{E}}_{\alpha}(t).
\nonumber
\end{equation}

Integrating this differential inequality we conclude that
\begin{equation}
\widehat{\mathcal{E}}_{\alpha}(t)\leq\widehat{\mathcal{E}}_{\alpha}(0)\exp\left(\frac{\Lambda_{0}}{\nu_{0}}t\right)
\qquad
\forall t\in[0,S_{0}],
\nonumber
\end{equation}
which implies (\ref{th:lemma-0}) because of (\ref{est:G-equiv}).
\end{proof}


The second result concerns continuous dependence on initial data. We consider two solutions to equation (\ref{K:eqn}) defined in the same time interval $[0,T]$, and we estimate their difference at time $t$ in terms of their difference at time~0. This estimate depends on the growth of the two solutions in the interval $[0,T]$, and we need to estimate explicitly this dependence.

The result turns out to be simpler, and the convergence rate to be better, if at least one of the two solutions is slightly more regular. This is a very well-know issue also in the case where one considers the difference between solutions to two different linear wave equations with different propagation speeds.

In order to deal with the case with minimal regularity, we need to introduce some notation. For every real number $\lambda>0$, and every vector $z\in H$, we define the low-frequency component $z_{\lambda,-}$ of $z$, and the high-frequency component $z_{\lambda,+}$ of $z$, as
\begin{equation}
z_{\lambda,-}:=\sum_{\lambda_{k}\leq\lambda}\langle z,e_{k}\rangle e_{k},
\qquad\qquad
z_{\lambda,+}:=\sum_{\lambda_{k}>\lambda}\langle z,e_{k}\rangle e_{k}.
\nonumber
\end{equation}

With this notation we can split (\ref{defn:E(t)}) as
\begin{equation}
E_{z}(t)=E_{z}^{\lambda,-}(t)+E_{z}^{\lambda,+}(t),
\nonumber
\end{equation}
where 
\begin{equation}
E_{z}^{\lambda,-}(t):=E_{(z_{\lambda,-})}(t)
\qquad\qquad\text{and}\qquad\qquad
E_{z}^{\lambda,+}(t):=E_{(z_{\lambda,+})}(t)
\nonumber
\end{equation}
denote the contribution to $E_{z}(t)$ of the low-frequency and high-frequency components of $z(t)$.

We are now ready to state the result.


\begin{prop}[Quantitative well-posedness]\label{prop:quant-wp}

Let us assume that $H$, $A$, and $m$ satisfy the hypotheses stated at the beginning of section~\ref{sec:statements}.

Let $S_{0}$ be a positive real number, and let $u$ and $v$ be two solutions to equation (\ref{K:eqn}) in the space 
\begin{equation}
C^{1}\left([0,S_{0}],D(A^{1/4})\right)\cap C^{0}\left([0,S_{0}],D(A^{3/4})\right).
\label{hp:reg-u}
\end{equation}

Let $R_{0}$ be a real number such that
\begin{equation}
\max\left\{|A^{1/2}u(t)|,|A^{1/2}v(t)|\right\}\leq R_{0}
\qquad
\forall t\in[0,S_{0}],
\label{hp:R0}
\end{equation}
and let us define the constants
\begin{equation}
C_{0}:=\max\left\{m(\sigma):0\leq\sigma\leq R_{0}^{2}\right\},
\qquad
L_{0}:=\max\left\{|m'(\sigma)|:0\leq\sigma\leq R_{0}^{2}\right\}.
\label{defn:C0-L0}
\end{equation}

Let $R_{1}$ be a real number such that
\begin{equation}
\max\left\{|A^{1/4}u'(t)|,|A^{3/4}u(t)|\right\}\leq R_{1}\qquad
\forall t\in[0,S_{0}]
\label{hp:R1-u}
\end{equation}
and
\begin{equation}
\max\left\{|A^{1/4}v'(t)|,|A^{3/4}v(t)|\right\}\leq 2R_{1}
\qquad
\forall t\in[0,S_{0}].
\label{hp:R1-v}
\end{equation}

Then the following statements hold true.
\begin{enumerate}
\renewcommand{\labelenumi}{(\arabic{enumi})}

\item  \emph{(More regular case).} Let us assume in addition that $u\in C^{0}\left([0,S_{0}],D(A^{5/4})\right)$, and let $R_{2}$ be a real number such that
\begin{equation}
|A^{5/4}u(t)|\leq R_{2}
\qquad
\forall t\in[0,S_{0}].
\label{hp:R2-reg}
\end{equation}

Then it turns out that
\begin{equation}
E_{(u-v)}(t)\leq
\Gamma_{1}E_{(u-v)}(0)\exp(\Gamma_{2}t)
\qquad
\forall t\in[0,S_{0}],
\label{th:wp-reg}
\end{equation}
where the constants $\Gamma_{1}$ and $\Gamma_{2}$ are defined by
\begin{equation}
\Gamma_{1}:=\frac{\max\{1,C_{0}\}}{\min\{1,\nu_{0}\}},
\qquad\qquad
\Gamma_{2}:=\frac{8L_{0}R_{1}^{2}}{\nu_{0}}+\frac{4L_{0}R_{0}(R_{1}+R_{2})}{\sqrt{\nu_{0}}}.
\label{defn:G12-reg}
\end{equation}

\item  \emph{(Minimal regularity case).} Let us assume that $u$ has just the regularity (\ref{hp:reg-u}). Let $\lambda>0$ be any positive real number, and let $R_{2,\lambda}$ be a real number such that (note that now the left-hand side involves only the low-frequency components of $u$, and that the constant in the right-hand side depends on $\lambda$)
\begin{equation}
|A^{5/4}u_{\lambda,-}(t)|\leq R_{2,\lambda}
\qquad
\forall t\in[0,S_{0}].
\label{hp:R2-noreg}
\end{equation}

Then for every $t\in[0,S_{0}]$ it turns out that
\begin{equation}
E_{(u-v)}(t)\leq
\Gamma_{1,\lambda}E_{(u-v)}(0)\exp(\Gamma_{2,\lambda}t)+
\Gamma_{3}\left(E_{u}^{\lambda,+}(0)+E_{v}^{\lambda,+}(0)\right)\exp(\Gamma_{4}t),
\label{th:wp-noreg}
\end{equation}
where the ($\lambda$-dependent) constants $\Gamma_{1,\lambda}$ and $\Gamma_{2,\lambda}$ are defined by
\begin{equation}
\Gamma_{1,\lambda}:=\frac{\max\{1,C_{0}\}}{\min\{1,\nu_{0}\}}\max\left\{1,\frac{1}{\lambda^{2}}\right\}
\label{defn:G1-noreg}
\end{equation}
and
\begin{equation}
\Gamma_{2,\lambda}:=
\frac{8L_{0}R_{1}^{2}}{\nu_{0}}+\frac{2L_{0}(2R_{0}+3R_{1})(2R_{1}+R_{2,\lambda})}{\sqrt{\nu_{0}}},
\label{defn:G2-noreg}
\end{equation}
while the ($\lambda$-independent) constants $\Gamma_{3}$ and $\Gamma_{4}$ are defined by
\begin{equation}
\Gamma_{3}:=2\frac{\max\{1,C_{0}\}}{\min\{1,\nu_{0}\}},
\qquad\qquad
\Gamma_{4}:=\frac{8L_{0}R_{1}^{2}}{\nu_{0}}.
\label{defn:G34}
\end{equation}

\end{enumerate}

\end{prop}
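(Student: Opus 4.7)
The plan is to set $w := u - v$ and observe that $w$ satisfies the inhomogeneous linear wave equation
\begin{equation*}
w'' + c_v(t)\,Aw = -(m_u - m_v)\,Au,
\end{equation*}
where $c_v(t) := m(|A^{1/2}v(t)|^2)$ and $m_u, m_v$ abbreviate $m(|A^{1/2}u|^2), m(|A^{1/2}v|^2)$. Hypotheses (3.2)--(3.6) give the uniform bounds $\nu_0 \leq c_v \leq C_0$ and $|c_v'(t)| \leq 2L_0|A^{3/4}v||A^{1/4}v'| \leq 8L_0R_1^2$. Factoring $|A^{1/2}u|^2 - |A^{1/2}v|^2 = \langle A^{1/2}w, A^{1/2}(u+v)\rangle$ yields the Lipschitz estimate $|m_u - m_v| \leq 2L_0R_0|A^{1/2}w|$.

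For Part (1), I would introduce the modified energies $\widehat{\mathcal{E}}_\alpha(w) := |A^\alpha w'|^2 + c_v(t)|A^{\alpha+1/2}w|^2$ for $\alpha \in \{0, 1/4\}$ and their sum $\widehat{\mathcal{E}}(w)$, which by the bounds on $c_v$ is equivalent to $E_{(u-v)}$ up to the factors $\min\{1,\nu_0\}$ and $\max\{1,C_0\}$. Mimicking the proof of Lemma~\ref{lemma:basic} gives
\begin{equation*}
\widehat{\mathcal{E}}_\alpha'(w) = c_v'(t)|A^{\alpha+1/2}w|^2 - 2(m_u - m_v)\langle A^\alpha w', A^{\alpha+1}u\rangle.
\end{equation*}
The first summand contributes $(8L_0R_1^2/\nu_0)\,\widehat{\mathcal{E}}_\alpha(w)$ exactly as in Lemma~\ref{lemma:basic}. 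Cauchy--Schwarz with (3.5) and (3.8) gives $|\langle w', Au\rangle| = |\langle A^{1/4}w', A^{3/4}u\rangle| \leq R_1|A^{1/4}w'|$ and $|\langle A^{1/4}w', A^{5/4}u\rangle| \leq R_2|A^{1/4}w'|$; combined with the Lipschitz bound and the weighted AM-GM inequality $2\sqrt{\nu_0}|A^{1/2}w||A^{1/4}w'| \leq c_v(t)|A^{1/2}w|^2 + |A^{1/4}w'|^2 \leq \widehat{\mathcal{E}}(w)$, the forcing produces a term proportional to $L_0R_0(R_1+R_2)/\sqrt{\nu_0}$ times $\widehat{\mathcal{E}}(w)$, matching $\Gamma_2$ in (3.11). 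A standard Gronwall argument and the energy equivalence then yield (3.10).

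For Part (2), the key observation is that the spectral projection $P_\lambda$ onto $\{e_k: \lambda_k \leq \lambda\}$ commutes with $A$, so $u_{\lambda,\pm}$ and $v_{\lambda,\pm}$ each satisfy \emph{linear} wave equations with the already-known propagation speeds $m(|A^{1/2}u|^2)$ and $m(|A^{1/2}v|^2)$. Applying Lemma~\ref{lemma:basic} to $u_{\lambda,+}$ and $v_{\lambda,+}$ at the levels $\alpha \in \{0, 1/4\}$ and summing bounds $E_u^{\lambda,+}(t)$ and $E_v^{\lambda,+}(t)$ with constants $\Gamma_3, \Gamma_4$ as in (3.15). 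The low-frequency difference $w_{\lambda,-} := P_\lambda w$ solves
\begin{equation*}
w_{\lambda,-}'' + c_v(t)Aw_{\lambda,-} = -(m_u - m_v)Au_{\lambda,-},
\end{equation*}
to which I would apply the Part (1) argument with $R_2$ replaced by the smaller $R_{2,\lambda}$ from (3.9). The subtlety is that $|m_u - m_v|$ still depends on the full $|A^{1/2}w| \leq |A^{1/2}w_{\lambda,-}| + |A^{1/2}w_{\lambda,+}|$: the low-frequency piece closes self-consistently inside the Gronwall for $\widehat{\mathcal{E}}(w_{\lambda,-})$, while the high-frequency piece serves as a forcing term already controlled by the a priori bound on $E_u^{\lambda,+} + E_v^{\lambda,+}$. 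By Pythagoras $E_{(u-v)}(t) = E_{w_{\lambda,-}}(t) + E_{w_{\lambda,+}}(t)$, with $E_{w_{\lambda,+}}(t) \leq 2(E_{u_{\lambda,+}}(t) + E_{v_{\lambda,+}}(t))$, producing (3.13).

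The main obstacle will be the bookkeeping in Part (2): the mixed term $|A^{1/2}w_{\lambda,+}||A^{1/4}w_{\lambda,-}'|$ cannot be absorbed into $\widehat{\mathcal{E}}(w_{\lambda,-})$ alone and forces a Duhamel-type rather than purely Gronwall step, while the extra $\max\{1, 1/\lambda^2\}$ factor in $\Gamma_{1,\lambda}$ should arise when translating between modified and ordinary energies on the low-frequency subspace in the presence of small eigenvalues of $A$.
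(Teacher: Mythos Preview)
Your Part~(1) argument is essentially the paper's: the same modified energy (the paper writes it as a single $F(t)=|w'|^2+|A^{1/4}w'|^2+c_v(t)(|A^{1/2}w|^2+|A^{3/4}w|^2)$ rather than a sum of two $\widehat{\mathcal E}_\alpha$), the same bounds on $c_v'$ and $|c_u-c_v|$, and the same Gronwall conclusion.

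In Part~(2), however, your route diverges from the paper's and, as you half-anticipate, the bookkeeping does not close on the stated constants. The paper does \emph{not} run a separate energy for $w_{\lambda,-}$ with the high-frequency piece treated as external forcing. Instead it builds a single hybrid energy
\[
F_\lambda(t)=|w'_{\lambda,-}|^2+|A^{1/4}w'_{\lambda,-}|^2+|A^{-1/4}w'_{\lambda,+}|^2
+c_v(t)\bigl(|A^{1/2}w_{\lambda,-}|^2+|A^{3/4}w_{\lambda,-}|^2+|A^{1/4}w_{\lambda,+}|^2\bigr),
\]
which carries the high-frequency components of $w$ at \emph{lower} regularity. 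The point is that after splitting $\langle A^{1/2}w,A^{1/2}(u+v)\rangle$ into its low- and high-frequency parts, the high-frequency half becomes $\langle A^{1/4}w_{\lambda,+},A^{3/4}(u_{\lambda,+}+v_{\lambda,+})\rangle$, and $|A^{1/4}w_{\lambda,+}|$ is already a term of $F_\lambda$. Hence $|c_u-c_v|\le L_0(2R_0+3R_1)\nu_0^{-1/2}F_\lambda^{1/2}$, and likewise the inner products $\langle A^{-1/4}w'_{\lambda,+},A^{3/4}u_{\lambda,+}\rangle$ etc.\ are bounded by $(2R_1+R_{2,\lambda})F_\lambda^{1/2}$. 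This yields a \emph{pure} Gronwall inequality $F_\lambda'\le\Gamma_{2,\lambda}F_\lambda$. The factor $\max\{1,1/\lambda^2\}$ in $\Gamma_{1,\lambda}$ then arises when bounding $F_\lambda(0)$ by $E_w(0)$ via $|A^{-1/4}w'_{\lambda,+}|^2\le\lambda^{-2}|A^{1/4}w'_{\lambda,+}|^2$; it has nothing to do with small eigenvalues of $A$ on the low-frequency side, contrary to your guess.

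Your Duhamel alternative would instead leave, in the bound for $E^{\lambda,-}_w(t)$, a cross-term of order $(R_1+R_{2,\lambda})^2\bigl(E_u^{\lambda,+}(0)+E_v^{\lambda,+}(0)\bigr)\exp(\Gamma_{2,\lambda}t)$: the high-frequency initial data would acquire $\lambda$-dependent coefficients, contradicting the statement, where they appear only with the $\lambda$-independent $\Gamma_3,\Gamma_4$. That decoupling is precisely what the downstream application (Proposition~\ref{prop:loc-quant}, statement~(2)) exploits --- one first chooses $\lambda$ to make the $\Gamma_3,\Gamma_4$ term small using only $\lambda$-independent quantities, and only afterwards fixes $R_{2,\lambda}$ --- so the hybrid-energy device is not merely cleaner but necessary for the constants as stated.
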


\begin{proof}

For every $t\in[0,S_{0}]$ we set for simplicity
\begin{equation}
w(t):=u(t)-v(t),
\qquad
c_{u}(t):=m\left(|A^{1/2}u(t)|^{2}\right),
\qquad
c_{v}(t):=m\left(|A^{1/2}v(t)|^{2}\right).
\nonumber
\end{equation}

Since $u$ and $v$ are solutions to equation (\ref{K:eqn}), with these notations it turns out that
\begin{equation}
w''(t)+c_{v}(t)Aw(t)=(c_{v}(t)-c_{u}(t))Au(t)
\qquad
\forall t\in[0,S_{0}].
\label{eqn:w}
\end{equation}

Due to (\ref{hp:m-sh}), (\ref{hp:R0}) and (\ref{defn:C0-L0}), the function $c_{v}$ satisfies the estimates
\begin{equation}
0<\nu_{0}\leq c_{v}(t)\leq C_{0}
\qquad
\forall t\in[0,S_{0}],
\label{est:bound-cv}
\end{equation}
and the same is true for $c_{u}(t)$. Moreover, we observe that
\begin{equation}
c_{v}'(t)=m'\left(|A^{1/2}v(t)|^{2}\right)\cdot 2\langle A^{1/4}v'(t),A^{3/4}v(t)\rangle,
\nonumber
\end{equation}
and therefore from (\ref{defn:C0-L0}) we deduce that
\begin{equation}
|c_{v}'(t)|\leq 2L_{0}|A^{1/4}v'(t)|\cdot|A^{3/4}v(t)|
\qquad
\forall t\in[0,S_{0}],
\label{est:cv'-basic}
\end{equation}
and an analogous estimate is true for $c_{u}'(t)$. Finally, the Lipschitz continuity of $m$ in the interval $[0,R_{0}^{2}]$ implies in particular that
\begin{equation}
|c_{v}(t)-c_{u}(t)|\leq L_{0}\left||A^{1/2}u(t)|^{2}-|A^{1/2}v(t)|^{2}\right|
\qquad
\forall t\in[0,S_{0}].
\label{est:cu-cv}
\end{equation}

We are now ready to prove the two statements. 

\subparagraph{\textmd{\textit{Proof of statement~(1)}}}

Let us consider the modified energy
\begin{equation}
F(t):=|w'(t)|^{2}+|A^{1/4}w'(t)|^{2}+
c_{v}(t)\left(|A^{1/2}w(t)|^{2}+|A^{3/4}w(t)|^{2}\right).
\nonumber
\end{equation}

Due to (\ref{est:bound-cv}), this energy is equivalent to $E_{w}(t)$ defined by (\ref{defn:E(t)}) in the sense that
\begin{equation}
\min\{1,\nu_{0}\}E_{w}(t)\leq F(t)\leq\max\{1,C_{0}\}E_{w}(t)
\qquad
\forall  t\in[0,S_{0}].
\label{est:EF-equiv}
\end{equation}

From (\ref{eqn:w}) it follows that the time-derivative of $F(t)$ is equal to
\begin{eqnarray*}
F'(t) & = &
c_{v}'(t)\left(|A^{1/2}w(t)|^{2}+|A^{3/4}w(t)|^{2}\right)
\\
& &
\mbox{}+2(c_{v}(t)-c_{u}(t))\left(\langle w'(t),Au(t)\rangle+\langle A^{1/4}w'(t),A^{5/4}u(t)\rangle\right).
\end{eqnarray*}

Let us estimate the terms in the right-hand side. From (\ref{est:cv'-basic}) and (\ref{hp:R1-v}) we know that
\begin{equation}
|c_{v}'(t)|\leq 8L_{0}R_{1}^{2},
\label{est:cv'}
\end{equation}
and therefore from the bound from below in (\ref{est:bound-cv}) we obtain that
\begin{eqnarray*}
c_{v}'(t)\left(|A^{1/2}w(t)|^{2}+|A^{3/4}w(t)|^{2}\right) & = & 
\frac{c_{v}'(t)}{c_{v}(t)}\cdot c_{v}(t)\left(|A^{1/2}w(t)|^{2}+|A^{3/4}w(t)|^{2}\right)
\\
& \leq &
\frac{8L_{0}R_{1}^{2}}{\nu_{0}}F(t).
\end{eqnarray*}

Similarly, from (\ref{hp:R0}) we obtain that
\begin{eqnarray*}
\left||A^{1/2}u(t)|^{2}-|A^{1/2}v(t)|^{2}\right| & = & 
\left|\langle A^{1/2}w(t),A^{1/2}(u(t)+v(t))\rangle\right|
\\
& \leq &
\frac{1}{c_{v}(t)^{1/2}}\cdot c_{v}(t)^{1/2}|A^{1/2}w(t)|\cdot\left(|A^{1/2}u(t)|+|A^{1/2}v(t)|\right)
\\
& \leq & 
\frac{1}{\sqrt{\nu_{0}}}\cdot F(t)^{1/2}\cdot 2R_{0},
\end{eqnarray*}
and therefore from (\ref{est:cu-cv}) we deduce that
\begin{equation}
|c_{u}(t)-c_{v}(t)|\leq\frac{2L_{0}R_{0}}{\sqrt{\nu_{0}}}F(t)^{1/2}.
\nonumber
\end{equation}

Finally, from (\ref{hp:R1-u}) and (\ref{hp:R2-reg}) we obtain that
\begin{multline*}
\quad\left|\langle w'(t),Au(t)\rangle+\langle A^{1/4}w'(t),A^{5/4}u(t)\rangle\right| \leq
\\[0.5ex]
|A^{1/4}w'(t)|\cdot|A^{3/4}u(t)|+|A^{1/4}w'(t)|\cdot|A^{5/4}u(t)|
\leq 
(R_{1}+R_{2})F(t)^{1/2}.\quad
\end{multline*}

From all these estimates we deduce that
\begin{equation}
F'(t)\leq\left(\frac{8L_{0}R_{1}^{2}}{\nu_{0}}+\frac{4L_{0}R_{0}(R_{1}+R_{2})}{\sqrt{\nu_{0}}}\right)F(t).
\qquad
\forall  t\in[0,S_{0}],
\nonumber
\end{equation}
and integrating this differential inequality we conclude that
\begin{equation}
F(t)\leq F(0)\exp\left(\left(\frac{8L_{0}R_{1}^{2}}{\nu_{0}}+\frac{4L_{0}R_{0}(R_{1}+R_{2})}{\sqrt{\nu_{0}}}\right)t\right)
\qquad
\forall  t\in[0,S_{0}],
\nonumber
\end{equation}
which implies (\ref{th:wp-reg}) because of the equivalence (\ref{est:EF-equiv}).

\subparagraph{\textmd{\textit{Proof of statement~(2)}}}

Let us consider the modified energy
\begin{eqnarray*}
F_{\lambda}(t) & := & 
|w_{\lambda,-}'(t)|^{2}+|A^{1/4}w_{\lambda,-}'(t)|^{2}+|A^{-1/4}w_{\lambda,+}'(t)|^{2}
\\
& & 
\mbox{}+c_{v}(t)\left(|A^{1/2}w_{\lambda,-}(t)|^{2}+|A^{3/4}w_{\lambda,-}(t)|^{2}+|A^{1/4}w_{\lambda,+}(t)|^{2}\right).
\end{eqnarray*}

From the estimate from below in (\ref{est:bound-cv}) it follows that
\begin{equation}
F_{\lambda}(t)\geq\min\{1,\nu_{0}\}E_{w}^{\lambda,-}(t)
\qquad
\forall t\in[0,S_{0}].
\label{equiv:F>E}
\end{equation}

Moreover, since
\begin{equation}
|A^{-1/4}w_{\lambda,+}'(t)|^{2}+|A^{1/4}w_{\lambda,+}(t)|^{2}\leq
\frac{1}{\lambda^{2}}\left(|A^{1/4}w_{\lambda,+}'(t)|^{2}+|A^{3/4}w_{\lambda,+}(t)|^{2}\right),
\nonumber
\end{equation}
from the estimate from above in (\ref{est:bound-cv}) it follows that
\begin{equation}
F_{\lambda}(t)\leq\max\{1,C_{0}\}\max\left\{1,\frac{1}{\lambda^{2}}\right\}E_{w}(t)
\qquad
\forall t\in[0,S_{0}].
\label{equiv:F<E}
\end{equation}

The time-derivative of $F_{\lambda}(t)$ is equal to
\begin{eqnarray*}
F_{\lambda}'(t) & = &
c_{v}'(t)\left(|A^{1/2}w_{\lambda,-}(t)|^{2}+|A^{3/4}w_{\lambda,-}(t)|^{2}+|A^{1/4}w_{\lambda,+}(t)|^{2}\right)
\\[0.5ex]
& & 
\mbox{}+2(c_{v}(t)-c_{u}(t))\cdot\left(
\langle w_{\lambda,-}'(t),Au_{\lambda,-}(t)\rangle\right.
\\[0.5ex]
& & 
\mbox{}+\left.
\langle A^{1/4}w_{\lambda,-}'(t),A^{5/4}u_{\lambda,-}(t)\rangle+
\langle A^{-1/4}w_{\lambda,+}'(t),A^{3/4}u_{\lambda,+}(t)\rangle
\right).
\end{eqnarray*}

Now estimate the terms in the right-hand side. As in the previous case, from (\ref{est:cv'}) and the bound from below in (\ref{est:bound-cv}) we obtain that
\begin{equation}
c_{v}'(t)\left(|A^{1/2}w_{\lambda,-}(t)|^{2}+|A^{3/4}w_{\lambda,-}(t)|^{2}+|A^{1/4}w_{\lambda,+}(t)|^{2}\right)\leq
\frac{8L_{0}R_{1}^{2}}{\nu_{0}}F_{\lambda}(t).
\nonumber
\end{equation}

Now we observe that
\begin{eqnarray*}
|A^{1/2}u(t)|^{2}-|A^{1/2}v(t)|^{2} & = & 
\langle A^{1/2}w(t),A^{1/2}(u(t)+v(t))\rangle
\\[0.5ex]
& = &
\langle A^{1/2}w_{\lambda,-}(t),A^{1/2}(u_{\lambda,-}(t)+v_{\lambda,-}(t))\rangle
\\[0.5ex]
&  &
\mbox{}+\langle A^{1/4}w_{\lambda,+}(t),A^{3/4}(u_{\lambda,+}(t)+v_{\lambda,+}(t))\rangle.
\end{eqnarray*}

Therefore, from (\ref{hp:R0}), (\ref{hp:R1-u}) and (\ref{hp:R1-v}) we obtain that
\begin{eqnarray*}
\left||A^{1/2}u(t)|^{2}-|A^{1/2}v(t)|^{2}\right| & \leq & 
|A^{1/2}w_{\lambda,-}(t)|\cdot\left(|A^{1/2}u_{\lambda,-}(t)|+|A^{1/2}v_{\lambda,-}(t)|\right)
\\[0.5ex]
& & 
\mbox{}+|A^{1/4}w_{\lambda,+}(t)|\cdot\left(|A^{3/4}u_{\lambda,+}(t)|+|A^{3/4}v_{\lambda,+}(t)|\right)
\\
& \leq &
\frac{1}{\sqrt{\nu_{0}}}\cdot F_{\lambda}(t)^{1/2}\cdot(2R_{0}+3R_{1}),
\end{eqnarray*}
and therefore from (\ref{est:cu-cv}) we deduce that
\begin{equation}
|c_{u}(t)-c_{v}(t)|\leq\frac{L_{0}(2R_{0}+3R_{1})}{\sqrt{\nu_{0}}}F_{\lambda}(t)^{1/2}.
\nonumber
\end{equation}

Finally, from (\ref{hp:R1-u}) we obtain that
\begin{eqnarray*}
|\langle w_{\lambda,-}'(t),Au_{\lambda,-}(t)\rangle| & = &
|\langle A^{1/4}w_{\lambda,-}'(t),A^{3/4}u_{\lambda,-}(t)\rangle|
\\
& \leq &
|A^{1/4}w_{\lambda,-}'(t)|\cdot|A^{3/4}u(t)|
\\
& \leq &
F_{\lambda}(t)^{1/2}\cdot R_{1},
\end{eqnarray*}
and analogously
\begin{equation}
|\langle A^{-1/4}w_{\lambda,+}'(t),A^{3/4}u_{\lambda,+}(t)\rangle|\leq
|A^{-1/4}w_{\lambda,+}'(t)|\cdot|A^{3/4}u_{\lambda,+}(t)|\leq
F_{\lambda}(t)^{1/2}\cdot R_{1},
\nonumber
\end{equation}
while from (\ref{hp:R2-noreg}) we obtain that
\begin{equation}
|\langle A^{1/4}w_{\lambda,-}'(t),A^{5/4}u_{\lambda,-}(t)\rangle|\leq 
|A^{1/4}w_{\lambda,-}'(t)|\cdot|A^{5/4}u_{\lambda,-}(t)|\leq
F_{\lambda}(t)^{1/2}\cdot R_{2,\lambda}.
\nonumber
\end{equation}

From all these estimates we deduce that
\begin{equation}
F_{\lambda}'(t)\leq
\left(\frac{8L_{0}R_{1}^{2}}{\nu_{0}}+\frac{2L_{0}(2R_{0}+3R_{1})(2R_{1}+R_{2,\lambda})}{\sqrt{\nu_{0}}}\right)
F_{\lambda}(t).
\qquad
\forall  t\in[0,S_{0}].
\nonumber
\end{equation}

Integrating this differential inequality we conclude that
\begin{equation}
F_{\lambda}(t)\leq F_{\lambda}(0)\exp\left(\left(
\frac{8L_{0}R_{1}^{2}}{\nu_{0}}+\frac{2L_{0}(2R_{0}+3R_{1})(2R_{1}+R_{2,\lambda})}{\sqrt{\nu_{0}}}
\right)t\right)
\qquad
\forall  t\in[0,S_{0}].
\nonumber
\end{equation}

Keeping (\ref{equiv:F<E}) and (\ref{equiv:F>E}) into account, this inequality implies that
\begin{equation}
E_{w}^{\lambda,-}(t)\leq\Gamma_{1,\lambda}E_{w}(0)
\exp\left(\Gamma_{2,\lambda}t\right),
\label{est:Ew-}
\end{equation}
with $\Gamma_{1,\lambda}$ and $\Gamma_{2,\lambda}$ given by (\ref{defn:G1-noreg}) and (\ref{defn:G2-noreg}). This estimates the contribution of low-frequency components.

In order to estimate the contribution of high-frequency component, we observe that 
\begin{equation}
|w_{\lambda,+}'(t)|^{2}\leq
\left(|u_{\lambda,+}'(t)|+|v_{\lambda,+}'(t)|\right)^{2}\leq
2|u_{\lambda,+}'(t)|^{2}+2|v_{\lambda,+}'(t)|^{2},
\nonumber
\end{equation}
and the same for the other terms in the definition of $E_{w}^{\lambda,+}(t)$. As a consequence, it turns out that
\begin{equation}
E_{w}^{\lambda,+}(t)\leq 2E_{u}^{\lambda,+}(t)+2E_{v}^{\lambda,+}(t).
\label{est:Ew+-}
\end{equation}

In order to estimate $E_{v}^{\lambda,+}(t)$, we observe that $v_{\lambda,+}$ is a solution to
\begin{equation}
v_{\lambda,+}''(t)+c_{v}(t)Av_{\lambda,+}(t)=0,
\nonumber
\end{equation}
and we apply Lemma~\ref{lemma:basic} with
\begin{equation}
z(t):=v_{\lambda,+}(t),
\qquad
c(t):=c_{v}(t),
\qquad
\alpha\in\{0,1/4\}.
\nonumber
\end{equation}

Recalling (\ref{est:cv'}), in this case we obtain that
\begin{equation}
E_{v}^{\lambda,+}(t)\leq 
E_{v}^{\lambda,+}(0)\frac{\max\{1,C_{0}\}}{\min\{1,\nu_{0}\}}
\exp\left(\frac{8L_{0}R_{1}^{2}}{\nu_{0}}t\right).
\nonumber
\end{equation}

An analogous argument applies to $u_{\lambda,+}$, leading to (in this case, since (\ref{hp:R1-u}) is stronger than (\ref{hp:R1-v}), the upper bound for $c_{u}(t)$ is smaller, and this would allow to replace the 8 in the exponential by a 2)
\begin{equation}
E_{u}^{\lambda,+}(t)\leq 
E_{u}^{\lambda,+}(0)\frac{\max\{1,C_{0}\}}{\min\{1,\nu_{0}\}}
\exp\left(\frac{8L_{0}R_{1}^{2}}{\nu_{0}}t\right).
\nonumber
\end{equation}

Plugging the last two estimates into (\ref{est:Ew+-}) we deduce that
\begin{equation}
E_{w}^{\lambda,+}(t)\leq\Gamma_{3}\left(E_{u}^{\lambda,+}(0)+E_{v}^{\lambda,+}(0)\right)\exp(\Gamma_{4}t),
\label{est:Ew+}
\end{equation}
with $\Gamma_{3}$ and $\Gamma_{4}$ given by (\ref{defn:G34}).

At this point (\ref{th:wp-noreg}) follows from (\ref{est:Ew-}) and (\ref{est:Ew+}).
\end{proof}

In the third result of this section we consider a given solution to problem (\ref{K:eqn})--(\ref{K:data}), and we assume that it is defined at least in some interval $[0,T]$. We show that, if $(v_{0},v_{1})$ is close enough to $(u_{0},u_{1})$ in $D(A^{3/4})\times D(A^{1/4})$, then also the solution with initial datum $(v_{0},v_{1})$ is defined at least on the interval $[0,T]$. The key point is that we provide an effective estimate on what ``close enough'' means, depending on the growth of $u(t)$.


\begin{prop}[Quantitative local existence around local solutions]\label{prop:loc-quant}

Let us assume that $H$, $A$, and $m$ satisfy the hypotheses stated at the beginning of section~\ref{sec:statements}.

Let $T>0$, and let $u$ be a solution to problem (\ref{K:eqn})--(\ref{K:data}) in the space (\ref{defn:strong-sol}). Let $M(\sigma)$ be defined by (\ref{defn:M}), let $H_{0}$ and $R_{0}$ be real numbers such that
\begin{equation}
H_{0}=|u_{1}|^{2}+M\left(|A^{1/2}u_{0}|^{2}\right),
\qquad\qquad
R_{0}\geq\left(\frac{2H_{0}}{\nu_{0}}\right)^{1/2},
\label{defn:R0}
\end{equation}
and let us define $C_{0}$ and $L_{0}$ as in (\ref{defn:C0-L0}). Let $R_{1}$ be a positive real number satisfying
\begin{equation}
\max\left\{|A^{1/4}u'(t)|,|A^{3/4}u(t)|\right\}\leq R_{1}
\qquad
\forall t\in[0,T].
\label{hp:R1}
\end{equation}

Let $v$ be another local solution to equation (\ref{K:eqn}), possibly defined on a different time interval, with initial data $(v_{0},v_{1})\in D(A^{3/4})\times D(A^{1/4})$ such that
\begin{equation}
|v_{1}|^{2}+M\left(|A^{1/2}v_{0}|^{2}\right)\leq 2H_{0},
\label{hp:ham-v}
\end{equation}

Then the following statements hold true.
\begin{enumerate}
\renewcommand{\labelenumi}{(\arabic{enumi})}

\item  \emph{(More regular case).} Let us assume in addition that $u\in C^{0}\left([0,T],D(A^{5/4})\right)$, and let $R_{2}$ be a real number such that
\begin{equation}
|A^{5/4}u(t)|\leq R_{2}
\qquad
\forall t\in[0,T].
\label{hp:R2-reg-T}
\end{equation}

Let us assume that
\begin{equation}
E_{(u-v)}(0)\cdot\Gamma_{1}\exp\left(\Gamma_{2}T\right)< R_{1}^{2},
\label{hp:E0-small-reg}
\end{equation}
where $\Gamma_{1}$ and $\Gamma_{2}$ are defined by (\ref{defn:G12-reg}).

Then it turns out that $\LS(v_{0},v_{1})>T$ and the difference $u-v$ satisfies
\begin{equation}
E_{(u-v)}(t)\leq E_{(u-v)}(0)\cdot\Gamma_{1}\exp(\Gamma_{2}t)
\qquad
\forall t\in[0,T].
\label{th:wp-E0-reg}
\end{equation}

\item  \emph{(Minimal regularity case).} Let us assume that $u$ has just the regularity (\ref{defn:strong-sol}). Let $\lambda>0$ be a real number such that
\begin{equation}
E_{u}^{\lambda,+}(0)\cdot\Gamma_{3}\exp\left(\Gamma_{4}T\right)
<\frac{R_{1}^{2}}{6},
\label{hp:lambda-noreg}
\end{equation}
where the constants $\Gamma_{3}$ and $\Gamma_{4}$ are defined by (\ref{defn:G34}), and let $R_{2,\lambda}$ be a positive real number such that
\begin{equation}
|A^{5/4}u_{\lambda,-}(t)|\leq R_{2,\lambda}
\qquad
\forall t\in[0,T].
\label{hp:R2-noreg-T}
\end{equation}

Let us assume that
\begin{equation}
E_{(u-v)}(0)\cdot
\left\{\Gamma_{1,\lambda}\exp\left(\Gamma_{2,\lambda}T\right)+2\Gamma_{3}\exp\left(\Gamma_{4}T\right)\right\}
<\frac{R_{1}^{2}}{2},
\label{hp:E0-small-noreg}
\end{equation}
where $\Gamma_{1,\lambda}$ and $\Gamma_{2,\lambda}$ are defined by (\ref{defn:G1-noreg}) and (\ref{defn:G2-noreg}).

Then it turns out that $\LS(v_{0},v_{1})>T$ and for every $t\in[0,T]$ the difference $u-v$ satisfies
\begin{eqnarray*}
E_{(u-v)}(t) & \leq &
E_{(u-v)}(0)\left\{\Gamma_{1,\lambda}\exp(\Gamma_{2,\lambda}t)+2\Gamma_{3}\exp(\Gamma_{4}t)\right\}
\\[0.5ex]
& & \mbox{}+3E_{u}^{\lambda,+}(0)\cdot\Gamma_{3}\exp(\Gamma_{4}t).
\end{eqnarray*}

\end{enumerate}

\end{prop}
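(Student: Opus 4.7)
Both statements follow the same bootstrap scheme, combining the quantitative well-posedness of Proposition~\ref{prop:quant-wp} with the continuation alternative~(\ref{th:alternative}). The smallness assumptions (\ref{hp:E0-small-reg}) and (\ref{hp:E0-small-noreg}) have been calibrated so that the upper bound provided by Proposition~\ref{prop:quant-wp} stays strictly below $R_{1}^{2}$ on the whole of $[0,T]$, which in turn keeps the auxiliary solution $v$ inside a region where Proposition~\ref{prop:quant-wp} can be re-applied.

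First I would dispose of the uniform $D(A^{1/2})$ bound required in hypothesis~(\ref{hp:R0}) of Proposition~\ref{prop:quant-wp}. From (\ref{hp:m-sh}) one has $M(\sigma)\geq\nu_{0}\sigma$, so conservation of the Hamiltonian~(\ref{defn:ham}), together with (\ref{hp:ham-v}) and (\ref{defn:R0}), yields $|A^{1/2}v(t)|^{2}\leq 2H_{0}/\nu_{0}\leq R_{0}^{2}$ on the entire life span of $v$, and similarly $|A^{1/2}u(t)|\leq R_{0}$ on $[0,T]$; with this choice of $R_{0}$ the constants $C_{0}$ and $L_{0}$ in the two propositions coincide. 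Hypothesis~(\ref{hp:R1-u}) is exactly (\ref{hp:R1}), so the only missing piece is (\ref{hp:R1-v}), which is precisely what the bootstrap must produce.

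The bootstrap starts from
\begin{equation}
T^{*}:=\sup\left\{t\in[0,\min\{T,\LS(v_{0},v_{1})\})\ :\ |A^{1/4}v'(s)|^{2}+|A^{3/4}v(s)|^{2}\leq 4R_{1}^{2}\ \forall s\in[0,t]\right\}.
\nonumber
\end{equation}
Since $\Gamma_{1}\geq 1$ and $\Gamma_{1,\lambda}\geq 1$, the smallness hypotheses imply $E_{(u-v)}(0)<R_{1}^{2}$, so the triangle inequality together with (\ref{hp:R1}) gives $|A^{1/4}v'(0)|<2R_{1}$ and $|A^{3/4}v(0)|<2R_{1}$, whence $T^{*}>0$. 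On $[0,T^{*}]$ all hypotheses of Proposition~\ref{prop:quant-wp} are in force with $S_{0}=T^{*}$, so the corresponding statement of that proposition applies. In the regular case, estimate (\ref{th:wp-reg}) combined with (\ref{hp:E0-small-reg}) gives $E_{(u-v)}(t)<R_{1}^{2}$ on $[0,T^{*}]$. In the minimal-regularity case I would first note that $E_{v}^{\lambda,+}(0)\leq 2E_{u}^{\lambda,+}(0)+2E_{(u-v)}(0)$, which converts (\ref{th:wp-noreg}) into exactly the estimate announced in statement~(2); then (\ref{hp:lambda-noreg}) bounds the term proportional to $E_{u}^{\lambda,+}(0)$ by $R_{1}^{2}/2$, while (\ref{hp:E0-small-noreg}) bounds the term proportional to $E_{(u-v)}(0)$ by $R_{1}^{2}/2$, so again $E_{(u-v)}(t)<R_{1}^{2}$ on $[0,T^{*}]$.

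The strict bound $E_{(u-v)}(t)<R_{1}^{2}$ on $[0,T^{*}]$ forces $|A^{1/4}v'(t)|^{2}+|A^{3/4}v(t)|^{2}<4R_{1}^{2}$ there. If $T^{*}<\min\{T,\LS(v_{0},v_{1})\}$, continuity would extend the strict inequality slightly past $T^{*}$, contradicting the maximality in the definition of $T^{*}$; the possibility $T^{*}=\LS(v_{0},v_{1})<T$ is ruled out by (\ref{th:alternative}), since the same quantity is bounded on $[0,T^{*}]$. Hence $T^{*}=T$, the solution $v$ survives past $T$, and the announced bound on $E_{(u-v)}(t)$ extends to all of $[0,T]$. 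The main obstacle one has to address is precisely the potential circularity of applying Proposition~\ref{prop:quant-wp}, which itself requires the a priori bound (\ref{hp:R1-v}) on $v$; the bootstrap closes this loop by exploiting the strict form of the smallness assumptions and the continuity of $v$ in $D(A^{3/4})\times D(A^{1/4})$.
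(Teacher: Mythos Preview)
Your strategy is exactly the paper's: verify the $D(A^{1/2})$ bound (\ref{hp:R0}) via Hamiltonian conservation, run a bootstrap on the $D(A^{3/4})\times D(A^{1/4})$ size of $v$, feed Proposition~\ref{prop:quant-wp} on the bootstrap interval, and close using the strict smallness assumptions together with the blow-up alternative. The handling of statement~(2), in particular the reduction $E_{v}^{\lambda,+}(0)\leq 2E_{u}^{\lambda,+}(0)+2E_{(u-v)}(0)$, is also identical to the paper's.

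There is, however, a concrete slip in the bootstrap quantity. You define $T^{*}$ via the \emph{sum} $|A^{1/4}v'(s)|^{2}+|A^{3/4}v(s)|^{2}\leq 4R_{1}^{2}$, and then assert that $E_{(u-v)}(t)<R_{1}^{2}$ ``forces $|A^{1/4}v'(t)|^{2}+|A^{3/4}v(t)|^{2}<4R_{1}^{2}$''. That implication is false: from $E_{(u-v)}(t)<R_{1}^{2}$ you only get $|A^{1/4}(u'-v')(t)|<R_{1}$ and $|A^{3/4}(u-v)(t)|<R_{1}$ separately, hence (with (\ref{hp:R1})) $|A^{1/4}v'(t)|<2R_{1}$ and $|A^{3/4}v(t)|<2R_{1}$ separately; the sum of squares can be close to $8R_{1}^{2}$, not $4R_{1}^{2}$. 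So the bootstrap, as written, does not close. The paper avoids this by defining its bootstrap set via the \emph{maximum}
\[
\max\{|A^{1/4}v'(t)|,|A^{3/4}v(t)|\}\leq 2R_{1},
\]
which is precisely hypothesis (\ref{hp:R1-v}); then $E_{(u-v)}(S_{0})<R_{1}^{2}$ gives each component strictly below $2R_{1}$, contradicting equality at $S_{0}$. Replace your sum-of-squares condition with this componentwise one and your argument goes through verbatim.
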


\begin{proof}

By the last statement in Theorem~\ref{thmbibl:loc-ex}, we know that $v(t)$ can be extended to some maximal interval $[0,T_{\mathrm{max}})$, where either $T_{\mathrm{max}}=+\infty$ or (\ref{th:alternative}) holds true for $v(t)$. We claim that $T_{\mathrm{max}}>T$ and actually
\begin{equation}
\max\left\{|A^{1/4}v'(t)|,|A^{3/4}v(t)|\right\}\leq 2R_{1}
\qquad
\forall t\in[0,T].
\label{est:claim-S}
\end{equation}

To this end, we set
\begin{equation}
S_{0}:=\sup\left\{
\tau\in(0,T_{\mathrm{max}}):\max\left\{|A^{1/4}v'(t)|,|A^{3/4}v(t)|\right\}\leq 2R_{1}\quad\forall t\in[0,\tau]
\right\},
\nonumber
\end{equation}
and we claim that $S_{0}\geq T$.

To begin with, we have to show that $S_{0}$ is the supremum of a nonempty set. This is true because $\Gamma_{1}\geq 1$ and $\Gamma_{2}\geq 0$ (and analogously $\Gamma_{1,\lambda}\geq 1$ and $\Gamma_{2,\lambda}\geq 0$), and hence both (\ref{hp:E0-small-reg}) and (\ref{hp:E0-small-noreg}) imply in particular that $E_{(u-v)}(0)<R_{1}^{2}$, and therefore a fortiori
\begin{equation}
|A^{1/4}(u_{1}-v_{1})|<R_{1}
\qquad\text{and}\qquad
|A^{3/4}(u_{0}-v_{0})|<R_{1}.
\nonumber
\end{equation}

From these estimates it follows that
\begin{equation}
|A^{1/4}v'(0)|=
|A^{1/4}v_{1}|\leq
|A^{1/4}u_{1}|+|A^{1/4}(u_{1}-v_{1})|<
2R_{1},
\nonumber
\end{equation}
and analogously
\begin{equation}
|A^{3/4}v(0)|=
|A^{3/4}v_{0}|\leq
|A^{3/4}u_{0}|+|A^{3/4}(u_{0}-v_{0})|<
2R_{1}.
\nonumber
\end{equation}

This means that the inequality in (\ref{est:claim-S}) is strict when $t=0$, and therefore by continuity it remains true at least for small positive times, which proves that $S_{0}$ is well defined.

Now let us assume by contradiction that $S_{0}<T$. Since the inequality in (\ref{est:claim-S}) is true for all $t<S_{0}$, we deduce that $S_{0}<T_{\mathrm{max}}$, because otherwise (\ref{th:alternative}) can not be true for $v(t)$. At this point, from the maximality of $S_{0}$ we conclude that
\begin{equation}
\max\left\{|A^{1/4}v'(S_{0})|,|A^{3/4}v(S_{0})|\right\}= 2R_{1},
\label{est:S-max}
\end{equation}
while of course 
\begin{equation}
\max\left\{|A^{1/4}v'(t)|,|A^{3/4}v(t)|\right\}\leq 2R_{1}
\qquad
\forall t\in[0,S_{0}].
\nonumber
\end{equation}

We claim that the functions $u$ and $v$ satisfy, in the interval $[0,S_{0}]$, the assumptions of Proposition~\ref{prop:quant-wp}. To this end, we already know that (\ref{hp:R1-u}) and (\ref{hp:R1-v}) are true in $[0,S_{0}]$. Let us check that (\ref{hp:R0}) holds true with $R_{0}$ defined by (\ref{defn:R0}). Indeed, from the energy equality of Theorem~\ref{thmbibl:loc-ex} and assumption (\ref{hp:ham-v}) we obtain that
\begin{equation}
|v'(t)|^{2}+M\left(|A^{1/2}v(t)|^{2}\right)=
|v_{1}|^{2}+M\left(|A^{1/2}v_{0}|^{2}\right)\leq 
2H_{0}
\qquad
\forall t\in[0,T_{\mathrm{max}}),
\label{en-eq-v}
\end{equation}
while of course
\begin{equation}
|u'(t)|^{2}+M\left(|A^{1/2}u(t)|^{2}\right)=H_{0}\leq 2H_{0}
\qquad
\forall t\in[0,T].
\label{en-eq-u}
\end{equation}

The strict hyperbolicity assumption (\ref{hp:m-sh}) implies that $M(\sigma)\geq\nu_{0}\sigma$ for every $\sigma\geq 0$, and therefore from (\ref{en-eq-v}) and (\ref{en-eq-u}) we obtain (\ref{hp:R0}), as requested.

Now the proof proceeds in two different ways depending on the regularity of $u(t)$.

\subparagraph{\textmd{\textit{Proof of statement~(1)}}}

In this case $u(t)$ and $v(t)$ satisfy the assumptions of statement~(1) of Proposition~\ref{prop:quant-wp}, from which we deduce that
\begin{equation}
E_{(u-v)}(t)\leq E_{(u-v)}(0)\cdot\Gamma_{1}\exp(\Gamma_{2}t)
\qquad
\forall t\in[0,S_{0}].
\label{est:Ew(t)-S0}
\end{equation}

Since $S_{0}<T$, this inequality with $t=S_{0}$, combined with the smallness assumption (\ref{hp:E0-small-reg}), implies that $E_{(u-v)}(S_{0})<R_{1}^{2}$, and hence a fortiori
\begin{equation}
|A^{1/4}(u'(S_{0})-v'(S_{0}))|<R_{1}
\qquad\text{and}\qquad
|A^{3/4}(u(S_{0})-v(S_{0}))|<R_{1}.
\nonumber
\end{equation}

This in turn implies that
\begin{equation}
|A^{1/4}v'(S_{0})|\leq
|A^{1/4}u'(S_{0})|+|A^{1/4}(u'(S_{0})-v'(S_{0}))|<
2R_{1},
\nonumber
\end{equation}
and analogously
\begin{equation}
|A^{3/4}v(S_{0})|\leq
|A^{3/4}u(S_{0})|+|A^{3/4}(u(S_{0})-v(S_{0}))|<
2R_{1}.
\nonumber
\end{equation}

The last two inequalities contradict (\ref{est:S-max}), which shows that $S_{0}=T$ in this case. As a consequence, $v(t)$ exists at least up to $T$, and satisfies the inequality in (\ref{est:Ew(t)-S0}) in the interval $[0,T]$, which proves (\ref{th:wp-E0-reg}).

\subparagraph{\textmd{\textit{Proof of statement~(2)}}}

In this case we exploit statement~(2) of Proposition~\ref{prop:quant-wp} with the value of $\lambda$ for which (\ref{hp:lambda-noreg}) holds true. To begin with, we observe that
\begin{equation}
|v_{\lambda,+}'(0)|^{2}\leq
2|u_{\lambda,+}'(0)|^{2}+2|u_{\lambda,+}'(0)-v_{\lambda,+}'(0)|^{2}\leq
2|u_{\lambda,+}'(0)|^{2}+2|u'(0)-v'(0)|^{2}.
\nonumber
\end{equation}

Arguing in the same way with the other terms of $E_{v}^{\lambda,+}(0)$ we find that
\begin{equation}
E_{v}^{\lambda,+}(0)\leq 2 E_{u}^{\lambda,+}(0)+2E_{(u-v)}(0).
\nonumber
\end{equation}

Therefore, from (\ref{th:wp-noreg}) we obtain that
\begin{equation}
E_{(u-v)}(t)\leq
E_{(u-v)}(0)\left\{\Gamma_{1,\lambda}\exp(\Gamma_{2,\lambda}t)+2\Gamma_{3}\exp(\Gamma_{4}t)\right\}+
3\Gamma_{3}E_{u}^{\lambda,+}(0)\exp(\Gamma_{4}t)
\nonumber
\end{equation}
for every $t\in[0,S_{0}]$. Setting $t=S_{0}$, and assuming that $S_{0}<T$, from (\ref{hp:lambda-noreg}) and (\ref{hp:E0-small-noreg}) we deduce that $E_{(u-v)}(S_{0})<R_{1}^{2}$. At this point the conclusion follows as in the previous case.
\end{proof}

The last result of this section is an interpolation inequality similar to~\cite[Proposition~3.3]{gg:K-Nishihara}. The idea is the following. It is well-known that one can always estimate an ``intermediate'' quantity in terms of a ``weaker'' and a ``stronger'' quantity. For example, if $\{a_{k}\}$ and $\{\lambda_{k}\}$ are two sequences of nonnegative real numbers, and $0<b<c$ are two real exponents, then it turns out that
\begin{equation}
\sum_{k=1}^{\infty}a_{k}\lambda_{k}^{b}\leq
\left(\sum_{k=1}^{\infty}a_{k}\right)^{1-\theta}
\cdot\left(\sum_{k=1}^{\infty}a_{k}\lambda_{k}^{c}\right)^{\theta}
\qquad\text{with}\qquad
\theta:=\frac{b}{c}.
\nonumber
\end{equation}

We observe that the higher is the exponent $c$, the smaller is the exponent $\theta$ of the corresponding sum in the right-hand side of the inequality.

In the following result we extend this type of inequality to strong quantities such as those defined in (\ref{defn:SG-spaces}), the idea being that the dependence on the strong quantity becomes of logarithmic type when $\varphi$ grows enough at infinity.

\begin{lemma}[Interpolation inequality]\label{lemma:interpolation}

Let $\varphi:[0,+\infty)\to[0,+\infty)$ be an increasing continuous function with $\varphi(0)=0$, and let $b$ be a positive real number such that
\begin{equation}
K_{b}:=\sup\left\{\sigma^{b}\exp\left(-\frac{1}{2}\varphi(\sigma)\right):\sigma\geq 0\right\}<+\infty.
\label{defn:ka}
\end{equation}

Let $\{a_{k}\}$ and $\{\lambda_{k}\}$ be two sequences of nonnegative real numbers such that
\begin{equation}
0<E:=\sum_{k=1}^{\infty}a_{k}<+\infty
\qquad\text{and}\qquad
F:=\sum_{k=1}^{\infty}a_{k}\max\{1,\lambda_{k}\}\exp(\varphi(\lambda_{k}))<+\infty.
\nonumber
\end{equation}

Then it turns out that
\begin{equation}
\sum_{k=1}^{\infty}a_{k}\lambda_{k}^{b}\leq
\left\{K_{b}+\left[\varphi^{-1}\left(2\log\frac{F}{E}\right)\right]^{b}\right\}E,
\label{th:est-ElogF}
\end{equation}
where $\varphi^{-1}$ denotes the inverse function of $\varphi$.

\end{lemma}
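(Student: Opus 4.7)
The plan is a standard low/high-frequency splitting, carried out at the threshold
\[
\Lambda^\star := \varphi^{-1}\!\left(2\log\frac{F}{E}\right).
\]

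Before splitting, I would check that $\Lambda^\star$ is well defined. Since $\max\{1,\lambda_k\}\exp(\varphi(\lambda_k))\geq 1$ for every $k$ (using $\varphi\geq 0$), one has $F\geq E$, hence $\log(F/E)\geq 0$ lies in the range of $\varphi$ (which is $[0,+\infty)$ by the hypotheses on $\varphi$), so $\Lambda^\star\in[0,+\infty)$. Note also that $\varphi(\Lambda^\star)=2\log(F/E)$, which gives the key identity
\[
\exp\!\left(-\frac{1}{2}\varphi(\Lambda^\star)\right)=\frac{E}{F}.
\]

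Next, I would split the sum according to whether $\lambda_k\leq\Lambda^\star$ or $\lambda_k>\Lambda^\star$. On the low-frequency side, the trivial bound $\lambda_k^b\leq(\Lambda^\star)^b$ yields
\[
\sum_{\lambda_k\leq\Lambda^\star} a_k\lambda_k^b \leq (\Lambda^\star)^b \, E = \left[\varphi^{-1}\!\left(2\log\frac{F}{E}\right)\right]^b E,
\]
which produces the second term inside the braces of \eqref{th:est-ElogF}. On the high-frequency side, the definition \eqref{defn:ka} of $K_b$ gives $\lambda_k^b\leq K_b\exp(\varphi(\lambda_k)/2)$, so I would write
\[
\exp\!\left(\tfrac{1}{2}\varphi(\lambda_k)\right)=\exp(\varphi(\lambda_k))\cdot\exp\!\left(-\tfrac{1}{2}\varphi(\lambda_k)\right)
\leq \exp(\varphi(\lambda_k))\cdot\exp\!\left(-\tfrac{1}{2}\varphi(\Lambda^\star)\right)=\frac{E}{F}\exp(\varphi(\lambda_k)),
\]
where I used monotonicity of $\varphi$ on the range $\lambda_k>\Lambda^\star$. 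Plugging this in and using $\max\{1,\lambda_k\}\geq 1$ to bound $\sum a_k\exp(\varphi(\lambda_k))\leq F$, the high-frequency contribution is
\[
\sum_{\lambda_k>\Lambda^\star} a_k\lambda_k^b \leq K_b\cdot\frac{E}{F}\cdot F = K_b\,E,
\]
which produces the first term inside the braces of \eqref{th:est-ElogF}. Summing the two estimates gives exactly \eqref{th:est-ElogF}.

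There is no real obstacle here beyond selecting the correct cutoff; the only minor care is the edge case $F=E$, in which $\Lambda^\star=0$ and the low-frequency sum involves only indices with $\lambda_k=0$ (contributing $0$), while the high-frequency argument degenerates to the pointwise inequality $\lambda_k^b\leq K_b\exp(\varphi(\lambda_k)/2)$ summed against $a_k$, still yielding the bound $K_b E$. Once this splitting is in hand, the rest is bookkeeping.
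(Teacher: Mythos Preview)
Your proof is correct and follows essentially the same approach as the paper: both split the sum at the threshold $\varphi^{-1}(2\log(F/E))$, bound the low-frequency part trivially by $(\Lambda^\star)^{b}E$, and bound the high-frequency part by combining the definition of $K_{b}$ with the monotonicity of $\varphi$ to extract the factor $E/F$ and then sum against $F$. Your additional remarks on the well-definedness of $\Lambda^\star$ and the edge case $F=E$ are welcome clarifications that the paper leaves implicit.
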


\begin{proof}

Let us partition the positive integers into the two subsets
\begin{equation}
A:=\left\{k\geq 1:\lambda_{k}\leq\varphi^{-1}\left(2\log\frac{F}{E}\right)\right\},
\qquad
B:=\left\{k\geq 1:\lambda_{k}>\varphi^{-1}\left(2\log\frac{F}{E}\right)\right\}.
\nonumber
\end{equation}

From the definition of $A$ it follows that
\begin{equation}
\sum_{k\in A}a_{k}\lambda_{k}^{b}\leq
\left[\varphi^{-1}\left(2\log\frac{F}{E}\right)\right]^{b}\sum_{k\in A}a_{k}\leq
\left[\varphi^{-1}\left(2\log\frac{F}{E}\right)\right]^{b}E.
\label{est:sum-A}
\end{equation}

From the definition of $B$, and the monotonicity of $\varphi^{-1}$, it follows that
\begin{equation}
\exp\left(\frac{1}{2}\varphi(\lambda_{k})\right)\geq
\exp\left(\log\frac{F}{E}\right)=
\frac{F}{E}
\qquad
\forall k\in B,
\nonumber
\end{equation}
and therefore by (\ref{defn:ka})
\begin{equation}
\lambda_{k}^{b}=
\lambda_{k}^{b}\exp\left(-\frac{1}{2}\varphi(\lambda_{k})\right)\cdot
\exp\left(-\frac{1}{2}\varphi(\lambda_{k})\right)\cdot
\exp(\varphi(\lambda_{k}))\leq
K_{b}\cdot\frac{E}{F}\cdot\exp(\varphi(\lambda_{k}))
\nonumber
\end{equation}
for every $k\in B$, from which we conclude that
\begin{equation}
\sum_{k\in B}a_{k}\lambda_{k}^{b}\leq
K_{b}\cdot\frac{E}{F}\cdot\sum_{k\in B}a_{k}\exp(\varphi(\lambda_{k}))\leq
K_{b}\cdot\frac{E}{F}\cdot F=
K_{b}E.
\label{est:sum-B}
\end{equation}

Summing (\ref{est:sum-A}) and (\ref{est:sum-B}) we obtain (\ref{th:est-ElogF}).
\end{proof}


\setcounter{equation}{0}
\section{Proof of the main results}\label{sec:proofs}

\subsection{Proof of Theorem~\ref{thm:lsc}}

Let us fix any positive real number $T<\LS(u_{0},u_{1})$, and let us apply Proposition~\ref{prop:loc-quant} in the interval $[0,T]$ with $(v_{0},v_{1}):=(u_{0\ep},u_{1\ep})$. Since $(u_{0\ep},u_{1\ep})\to(u_{0},u_{1})$ in $D(A^{1/2})\times H$, if $H_{0}$ is defined as in (\ref{defn:R0}) then (\ref{hp:ham-v}) is satisfied when $\ep$ is small enough. Now the argument proceeds in a slightly different way depending on the regularity of $u$.

\paragraph{\textmd{\textit{Case with more regularity}}}

If $(u_{0},u_{1})\in D(A^{5/4})\times D(A^{3/4})$, then from the preservation of regularity in Theorem~\ref{thmbibl:loc-ex} we know that $u(t)$ remains bounded in the same space for every $t\in[0,T]$. Therefore, we can choose real numbers $H_{0}$, $R_{0}$, $R_{1}$, and $R_{2}$ in such a way that (\ref{defn:R0}), (\ref{hp:R1}), and (\ref{hp:R2-reg-T}) hold true. Now we choose $\ep_{0}>0$ such that, for every $\ep\in(0,\ep_{0})$, the number $E_{(u-u_{\ep})}(0)$ satisfies (\ref{hp:E0-small-reg}) for every $\ep\in(0,\ep_{0})$. 

At this point statement~(1) of Proposition~\ref{prop:loc-quant} guarantees that $\LS(u_{0\ep},u_{1\ep})\geq T$, and
\begin{equation}
E_{(u-u_{\ep})}(t)\leq E_{(u-u_{\ep})}(0)\cdot\Gamma_{1}\exp(\Gamma_{2}t)
\qquad
\forall t\in[0,T],
\quad
\forall\ep\in(0,\ep_{0}).
\nonumber
\end{equation}

Since $T$ is arbitrary, this proves (\ref{th:LS-lsc}) and (\ref{th:u-uep-reg}) in the more regular case.

\paragraph{\textmd{\textit{Case with minimal regularity}}}

If the function $u$ has just the regularity (\ref{defn:strong-sol}), then we define $H_{0}$, $R_{0}$, $R_{1}$ in such a way that (\ref{defn:R0}) and (\ref{hp:R1}) hold true, and we choose $\lambda$ in such a way that (\ref{hp:lambda-noreg}) is satisfied (here it is essential that the constants $\Gamma_{3}$ and $\Gamma_{4}$ do not depend on $\lambda$). 

Now we choose $R_{2,\lambda}$ so that (\ref{hp:R2-noreg-T}) holds true, and we choose $\ep_{0}>0$ such that, for every $\ep\in(0,\ep_{0})$, the number  $E_{(u-u_{\ep})}(0)$ satisfies (\ref{hp:E0-small-noreg}) for every $\ep\in(0,\ep_{0})$ (in this case the smallness of $\ep_{0}$ depends both on $T$ and on $\lambda$). 

At this point statement~(2) of Proposition~\ref{prop:loc-quant} guarantees that $\LS(u_{0\ep},u_{1\ep})\geq T$, which proves (\ref{th:LS-lsc}) also in this case. In addition, we obtain also that
\begin{equation}
E_{(u-u_{\ep})}(t)\leq
E_{(u-u_{\ep})}(0)\left\{\Gamma_{1,\lambda}\exp(\Gamma_{2,\lambda}t)+2\Gamma_{3}\exp(\Gamma_{4}t)\right\}+
3\Gamma_{3}E_{u}^{\lambda,+}(0)\exp(\Gamma_{4}t)
\nonumber
\end{equation}
for every $t\in[0,T]$ and every $\ep\in(0,\ep_{0})$, and letting $\ep\to 0^{+}$ we deduce that
\begin{equation}
\limsup_{\ep\to 0^{+}}\sup_{t\in[0,T]}E_{(u-u_{\ep})}(t)\leq
3\Gamma_{3}E_{u}^{\lambda,+}(0)\exp(\Gamma_{4}T).
\nonumber
\end{equation}

Since this is true for every $\lambda$ for which (\ref{hp:lambda-noreg}) is satisfied, letting $\lambda\to +\infty$ we obtain exactly (\ref{th:u-uep-noreg}).
\qed



\subsection{Proof of Theorem~\ref{thm:growth}}

If $|u_{1}|^{2}+|A^{1/2}u_{0}|^{2}=0$, then there is nothing to prove because the solution is the constant $u(t)\equiv u_{0}$ with $u_{0}\in\ker A$, and therefore all the estimates are trivial.

Otherwise, let us consider the classical Hamiltonian $H(t)$ defined in (\ref{defn:ham}), the classical $\alpha$-energy (for every real number $\alpha\geq 0$)
\begin{equation}
\mathcal{E}_{\alpha}(t):=|A^{\alpha}u'(t)|^{2}+|A^{\alpha+1/2}u(t)|^{2},
\nonumber
\end{equation}
and the uncorrected $\varphi$-energy
\begin{equation}
\widehat{F}_{\varphi}(t):=\sum_{k=1}^{\infty}\max\{1,\lambda_{k}\}
\left(\langle u'(t),e_{k}\rangle^{2}+\lambda_{k}^{2}\langle u(t),e_{k}\rangle^{2}\right)
\exp(\varphi(\lambda_{k})).
\nonumber
\end{equation}

From the classical energy equality we know that $H(t)$ is equal to a positive constant for every $t\geq 0$. Thanks to the usual coercivity estimate $M(\sigma)\geq\nu_{0}\sigma$ for every $\sigma\geq 0$, this implies a uniform bound on $|u'(t)|$ and $|A^{1/2}u(t)|$, and therefore also an estimate of the form
\begin{equation}
\nu_{0}|A^{1/2}u(t)|^{2}\leq 
M\left(|A^{1/2}u(t)|^{2}\right)\leq
C_{0}|A^{1/2}u(t)|^{2},
\nonumber
\end{equation}
which implies that
\begin{equation}
\frac{1}{\max\{1,C_{0}\}}H(t)\leq
\mathcal{E}_{0}(t)\leq
\frac{1}{\min\{1,\nu_{0}\}}H(t).
\nonumber
\end{equation}

Since $H(t)$ is a positive constant, this means that $\mathcal{E}_{0}(t)$ is bounded from below and from above by two positive constants.

Finally, we observe that the uncorrected $\varphi$-energy is bounded from above by a multiple of the corrected version (\ref{defn:F-phi}), and more precisely
\begin{equation}
\widehat{F}_{\varphi}(t)\leq
\frac{1}{\min\{1,\nu_{0}\}}F_{\varphi}(t)
\qquad
\forall t\geq 0.
\label{est:equiv-Fphi}
\end{equation}

Now we consider the analytic and the quasi-analytic scenario separately.

\paragraph{\textmd{\textit{Analytic data}}}

In the case where $\varphi(\sigma)=r_{0}\sigma$, the differential inequality (\ref{th:diff-ineq-F}) reads as
\begin{equation}
F_{\varphi}'(t)\leq
c_{0}F_{\varphi}(t)\left\{1+\frac{1}{r_{0}}\log\frac{F_{\varphi}(t)}{c_{1}}\right\}
\qquad
\forall t\geq 0.
\label{diff-ineq:an}
\end{equation}

Integrating this differential inequality we deduce that (\ref{th:F-an}) holds true with
\begin{equation}
\beta_{1}:=c_{0}+\frac{c_{0}}{r_{0}}+\frac{c_{0}}{r_{0}}\log\frac{F_{\varphi}(0)}{c_{1}}.
\nonumber
\end{equation}

Indeed, (\ref{diff-ineq:an}) is equivalent to saying that $F_{\varphi}(t)$ is a subsolution of the differential equation
\begin{equation}
y'(t)=c_{0}y(t)\left\{1+\frac{1}{r_{0}}\log\frac{y(t)}{c_{1}}\right\}
\nonumber
\end{equation}
with initial datum $F_{\varphi}(0)$, while a direct substitution reveals that the right-hand side of (\ref{th:F-an}) is a supersolution of the same differential equation with initial datum $F_{\varphi}(0)\cdot e$.

In order to prove (\ref{th:an-est}), for every $\alpha\geq 0$ and every $t\geq 0$ we apply Lemma~\ref{lemma:interpolation} with
\begin{equation}
b:=4\alpha,
\qquad\qquad
a_{k}:=\langle u'(t),e_{k}\rangle^{2}+\lambda_{k}^{2}\langle u(t),e_{k}\rangle^{2}.
\label{defn:ak}
\end{equation}

With these choices it turns out that
\begin{equation}
E:=\mathcal{E}_{0}(t),
\qquad\qquad
F:=\widehat{F}_{\varphi}(t),
\qquad\qquad
\sum_{k=1}^{\infty}a_{k}\lambda_{k}^{b}=\mathcal{E}_{\alpha}(t),
\nonumber
\end{equation}
and therefore inequality (\ref{th:est-ElogF}) reads as
\begin{equation}
|A^{\alpha}u'(t)|^{2}+|A^{\alpha+1/2}u(t)|^{2}\leq 
\left\{K_{4\alpha}+\left[\frac{2}{r_{0}}\log\left(\frac{\widehat{F}_{\varphi}(t)}{\mathcal{E}_{0}(t)}\right)\right]^{4\alpha}\right\}\mathcal{E}_{0}(t).
\nonumber
\end{equation}

Since $\mathcal{E}_{0}(t)$ is bounded from above and from below by positive constants, when we plug (\ref{est:equiv-Fphi}) and (\ref{th:F-an}) into this inequality we obtain (\ref{th:an-est}).
 
\paragraph{\textmd{\textit{Quasi analytic data}}}

Let $\varphi^{-1}:[0,+\infty)\to[0,+\infty)$ denote the inverse of the function $\varphi$ defined in (\ref{defn:qa}). To begin with, we observe that there exists a positive real number $c_{2}$ such that
\begin{equation}
\varphi^{-1}(\sigma)\leq c_{2}\sigma\log(2+\sigma)
\qquad
\forall\sigma\geq 0.
\label{est:inv-phi}
\end{equation}

Indeed, if we set $\psi(\sigma):=\sigma\log(2+\sigma)$, it is enough to observe that $\varphi^{-1}(\sigma)$ and $\psi(\sigma)$ are positive for $\sigma>0$ and
\begin{equation}
\lim_{\sigma\to 0^{+}}\frac{\varphi^{-1}(\sigma)}{\psi(\sigma)}=
\lim_{\sigma\to 0^{+}}\frac{\sigma}{\psi(\varphi(\sigma))}=
1
\qquad\text{and}\qquad
\lim_{\sigma\to +\infty}\frac{\varphi^{-1}(\sigma)}{\psi(\sigma)}=
\lim_{\sigma\to +\infty}\frac{\sigma}{\psi(\varphi(\sigma))}=
1.
\nonumber
\end{equation}

At this point the differential inequality (\ref{th:diff-ineq-F}) implies that
\begin{equation}
F_{\varphi}'(t)\leq
c_{0}F_{\varphi}(t)\left\{1+c_{2}\log\frac{F_{\varphi}(t)}{c_{1}}\cdot
\log\left(2+\log\frac{F_{\varphi}(t)}{c_{1}}\right)\right\}
\qquad
\forall t\geq 0.
\label{diff-ineq:qa}
\end{equation}

Integrating this differential inequality we deduce that (\ref{th:F-qa}) holds true with
\begin{equation}
\beta_{2}:=c_{0}+c_{0}c_{2}\left(1+\log\frac{F_{\varphi}(0)}{c_{1}}\right)
\left(1+\log\left(3+\log\frac{F_{\varphi}(0)}{c_{1}}\right)\right).
\nonumber
\end{equation}

Indeed, (\ref{diff-ineq:qa}) is equivalent to saying that $F_{\varphi}(t)$ is a subsolution of the differential equation
\begin{equation}
y'(t)=c_{0}y(t)\left\{1+c_{2}\log\frac{y(t)}{c_{1}}\cdot\log\left(2+\log\frac{y(t)}{c_{1}}\right)\right\}
\nonumber
\end{equation}
with initial datum $F_{\varphi}(0)$, while a direct substitution reveals that the right-hand side of (\ref{th:F-qa}) is a supersolution of the same differential equation with initial datum $F_{\varphi}(0)\cdot e^{e}$.

In order to prove (\ref{th:qa-est}), for every $\alpha\geq 0$ and every $t\geq 0$ we apply Lemma~\ref{lemma:interpolation} with $b$ and $a_{k}$ defined again as in (\ref{defn:ak}). Keeping (\ref{est:inv-phi}) into account, in this case (\ref{th:est-ElogF}) implies that
\begin{equation}
\mathcal{E}_{\alpha}(t)\leq 
\left\{K_{4\alpha}+c_{2}^{4\alpha}\left[2\log\left(\frac{\widehat{F}_{\varphi}(t)}{\mathcal{E}_{0}(t)}\right)\cdot
\log\left(2+2\log\frac{\widehat{F}_{\varphi}(t)}{\mathcal{E}_{0}(t)}\right)\right]^{4\alpha}\right\}\mathcal{E}_{0}(t).
\nonumber
\end{equation}

Since $\mathcal{E}_{0}(t)$ is bounded from above and from below by positive constants, when we plug (\ref{th:F-qa}) into this inequality we obtain an estimate of the form
\begin{equation}
\mathcal{E}_{\alpha}(t)\leq
c_{3,\alpha}\left[\exp(\beta_{2}t)\exp(\exp(\beta_{2}t))\strut\right]^{4\alpha}
\nonumber
\end{equation}
for a suitable constant $c_{3,\alpha}$, and this estimate in turn implies (\ref{th:qa-est}).
\qed


\subsection{Proof of Theorem~\ref{thm:AGE}}

\paragraph{\textmd{\textit{Finite dimensional data}}}

As in the proof of Theorem~\ref{thm:growth}, from the classical energy equality and the strict hyperbolicity assumption (\ref{hp:m-sh}) we obtain a uniform bound on $|u'(t)|$ and $|A^{1/2}u(t)|$. Since the solution lies in a $A$-invariant subspace of $H$ with finite dimension, this is enough to deduce that $|A^{\alpha}u'(t)|$ and $|A^{\alpha+1/2}u(t)|$ are bounded uniformly in $t\geq 0$ for every real number $\alpha\geq 0$, of course with a bound that depends on $\alpha$ and on the maximal eigenvalue of $A$ in this subspace.

In particular, there exists a constant $B_{0}$ such that
\begin{equation}
\max\left\{|A^{1/4}u'(t)|^{2},|A^{3/4}u(t)|^{2},|A^{5/4}u(t)|^{2}\right\}\leq B_{0}
\qquad
\forall t\geq 0.
\label{th:fd-est}
\end{equation}

Now we claim that, for $\ep>0$ small enough, the assumptions of statement~(1) of Proposition~\ref{prop:loc-quant} are satisfied with $H_{0}$ and $R_{0}$ defined as in (\ref{defn:R0}) (with equality in the definition of $R_{0}$), and
\begin{equation}
(v_{0},v_{1}):=(u_{0\ep},u_{1\ep}),
\qquad\qquad
R_{1}=R_{2}:=B_{0},
\qquad\qquad
T:=\frac{1}{\Gamma_{2}}|\log\ep|,
\nonumber
\end{equation}
where $\Gamma_{2}$ is defined by (\ref{defn:G12-reg}). Indeed, estimates (\ref{hp:R1}) and (\ref{hp:R2-reg-T}) are true because of (\ref{th:fd-est}), estimate (\ref{hp:ham-v}) is true when $\ep$ is small enough, while (\ref{hp:E0-small-reg}) is true because
\begin{equation}
E_{(u-u_{\ep})}(0)\cdot\Gamma_{1}\exp(\Gamma_{2}T)\leq
\ep^{2}\cdot\Gamma_{1}\exp(|\log\ep|)=
\Gamma_{1}\ep\leq
B_{0}^{2}=
R_{1}^{2}.
\nonumber
\end{equation}

\paragraph{\textmd{\textit{Analytic data}}}

Let us apply (\ref{th:an-est}) with $\alpha=1/4$ and $\alpha=3/4$. We deduce that there exists a constant $B_{1}$ such that
\begin{equation}
\max\left\{|A^{1/4}u'(t)|^{2},|A^{3/4}u(t)|^{2},|A^{5/4}u(t)|^{2}\right\}\leq B_{1}\exp(3\beta_{1}t)
\qquad
\forall t\geq 0.
\label{th:an-est-B1}
\end{equation}

We claim that, for $\ep>0$ small enough, the assumptions of statement~(1) of Proposition~\ref{prop:loc-quant} are satisfied with $H_{0}$ and $R_{0}$ defined as in (\ref{defn:R0}) (with equality in the definition of $R_{0}$), and
\begin{equation}
(v_{0},v_{1}):=(u_{0\ep},u_{1\ep}),
\qquad\qquad
R_{1}=R_{2}:=\left(\frac{\nu_{0}}{10L_{0}}\right)^{1/2}|\log\ep|^{1/4},
\label{def:R12-an}
\end{equation}
and
\begin{equation}
T:=\frac{1}{12\beta_{1}}\log(|\log\ep|).
\nonumber
\end{equation}

Since (\ref{hp:ham-v}) is true when $\ep$ is small enough, it remains to check that also inequalities (\ref{hp:R1}), (\ref{hp:R2-reg-T}) and (\ref{hp:E0-small-reg}) are satisfied when $\ep$ is small enough.

\begin{itemize}

\item  As for (\ref{hp:E0-small-reg}), we observe that when $R_{1}$ and $R_{2}$ are given by (\ref{def:R12-an}), then the constant $\Gamma_{2}$ defined in (\ref{defn:G12-reg}) satisfies $\Gamma_{2}\leq|\log\ep|^{1/2}$ when $\ep$ is small enough. At this point for $\ep$ small enough it turns out that
\begin{equation}
E_{(u-u_{\ep})}(0)\cdot\Gamma_{1}\exp(\Gamma_{2}T)\leq
\ep^{2}\cdot\Gamma_{1}\exp(|\log\ep|^{1/2}T)\leq
R_{1}^{2},
\nonumber
\end{equation}
where the last inequality is true because the left-hand side tends to~0 and the right-hand side tends to $+\infty$ as $\ep\to 0^{+}$.

\item As for (\ref{hp:R1}) and (\ref{hp:R2-reg-T}), we need to check that
\begin{equation}
\max\left\{|A^{1/4}u'(t)|^{2},|A^{3/4}u(t)|^{2},|A^{5/4}u(t)|^{2}\right\}\leq
\frac{\nu_{0}}{10L_{0}}|\log\ep|^{1/2}.
\nonumber
\end{equation}

Thanks to (\ref{th:an-est-B1}), it is enough to show that
\begin{equation}
B_{1}\exp\left(\frac{1}{4}\log(|\log\ep|)\right)\leq
\frac{\nu_{0}}{10L_{0}}|\log\ep|^{1/2},
\nonumber
\end{equation}
and this is true when $\ep$ is small enough.

\end{itemize}

\paragraph{\textmd{\textit{Classical quasi-analytic data}}}

As in the analytic case, we apply (\ref{th:qa-est}) with $\alpha=1/4$ and $\alpha=3/4$, and we deduce that there exists a constant $B_{2}$ such that
\begin{equation}
\max\left\{|A^{1/4}u'(t)|^{2},|A^{3/4}u(t)|^{2},|A^{5/4}u(t)|^{2}\right\}\leq B_{2}\exp(\exp(3\beta_{2}t))
\qquad
\forall t\geq 0.
\nonumber
\end{equation}

Now again we claim that, when $\ep>0$ is small enough, the assumptions of statement~(1) of Proposition~\ref{prop:loc-quant} are satisfied with the choices (\ref{def:R12-an}) and 
\begin{equation}
T:=\frac{1}{12\beta_{2}}\log(\log(|\log\ep|)).
\nonumber
\end{equation}

The proof of (\ref{hp:E0-small-reg}) is analogous to the analytic case, because we used only that $\Gamma_{2}T\leq|\log\ep|$ for $\ep$ small. The proof of (\ref{hp:R1}) and (\ref{hp:R2-reg-T}) reduces to the inequality
\begin{equation}
B_{2}\exp\left(\exp\left(\frac{1}{4}\log(\log(|\log\ep|))\right)\right)\leq
\frac{\nu_{0}}{10L_{0}}|\log\ep|^{1/2},
\nonumber
\end{equation}
which again is true when $\ep$ is small enough.
\qed

\begin{rmk}[Back to the null solution]\label{rmk:basic}
\begin{em}

In the case where $u(t)$ is the null solution, this technique leads to the classical result that $\LS(u_{0\ep},u_{1\ep})\geq C/\ep^{2}$. Indeed, it is enough to apply statement~(1) of Proposition~\ref{prop:loc-quant} with
\begin{equation}
(v_{0},v_{1}):=(u_{0\ep},u_{1\ep}),
\qquad\qquad
R_{0}=R_{1}=R_{2}:=a_{0}\ep,
\qquad\qquad
T:=\frac{a_{1}}{\ep^{2}}
\nonumber
\end{equation}
for suitable choices of the constants $a_{0}$ and $a_{1}$.

\end{em}
\end{rmk}








\subsubsection*{\centering Acknowledgments}

Both authors are members of the Italian {\selectlanguage{italian}%
``Gruppo Nazionale per l'Analisi Matematica, la Probabilit\`{a} e le loro Applicazioni'' (GNAMPA) of the ``Istituto Nazionale di Alta Matematica'' (INdAM)}. The first author was partially supported by PRIN 2020XB3EFL, ``Hamiltonian and Dispersive PDEs''.

\selectlanguage{english}



\begin{thebibliography}{10}
\providecommand{\url}[1]{\texttt{#1}}
\providecommand{\urlprefix}{URL }
\providecommand{\selectlanguage}[1]{\relax}
\providecommand{\eprint}[2][]{\url{#2}}

\bibitem{1996-TAMS-AroPan}
\textsc{A.~Arosio}, \textsc{S.~Panizzi}.
\newblock On the well-posedness of the {K}irchhoff string.
\newblock \emph{Trans. Amer. Math. Soc.} \textbf{348} (1996), no.~1, 305--330.

\bibitem{1984-RNM-AroSpa}
\textsc{A.~Arosio}, \textsc{S.~Spagnolo}.
\newblock Global solutions to the {C}auchy problem for a nonlinear hyperbolic
  equation.
\newblock In \emph{Nonlinear partial differential equations and their
  applications. {C}oll\`ege de {F}rance seminar, {V}ol. {VI} ({P}aris,
  1982/1983)}, \emph{Res. Notes in Math.}, volume 109, pages 1--26. Pitman,
  Boston, MA, 1984.

\bibitem{2020-Nonlinearity-BalHau}
\textsc{P.~Baldi}, \textsc{E.~Haus}.
\newblock On the existence time for the {K}irchhoff equation with periodic
  boundary conditions.
\newblock \emph{Nonlinearity} \textbf{33} (2020), no.~1, 196--223.

\bibitem{2022-SIAM-BalHau}
\textsc{P.~Baldi}, \textsc{E.~Haus}.
\newblock Longer lifespan for many solutions of the {K}irchhoff equation.
\newblock \emph{SIAM J. Math. Anal.} \textbf{54} (2022), no.~1, 306--342.

\bibitem{1940-Bernstein}
\textsc{S.~Bernstein}.
\newblock Sur une classe d'\'{e}quations fonctionnelles aux d\'{e}riv\'{e}es
  partielles.
\newblock \emph{Bull. Acad. Sci. URSS. S\'{e}r. Math. [Izvestia Akad. Nauk
  SSSR]} \textbf{4} (1940), 17--26.

\bibitem{1992-InvM-DAnSpa}
\textsc{P.~D'Ancona}, \textsc{S.~Spagnolo}.
\newblock Global solvability for the degenerate {K}irchhoff equation with real
  analytic data.
\newblock \emph{Invent. Math.} \textbf{108} (1992), no.~2, 247--262.

\bibitem{1992-Ferrara-DAnSpa}
\textsc{P.~D'Ancona}, \textsc{S.~Spagnolo}.
\newblock On an abstract weakly hyperbolic equation modelling the nonlinear
  vibrating string.
\newblock In \emph{Developments in partial differential equations and
  applications to mathematical physics ({F}errara, 1991)}, pages 27--32.
  Plenum, New York, 1992.

\bibitem{1993-ARMA-DAnSpa}
\textsc{P.~D'Ancona}, \textsc{S.~Spagnolo}.
\newblock A class of nonlinear hyperbolic problems with global solutions.
\newblock \emph{Arch. Rational Mech. Anal.} \textbf{124} (1993), no.~3,
  201--219.

\bibitem{gg:EnergySpaceSG}
\textsc{M.~Ghisi}, \textsc{M.~Gobbino}.
\newblock Global solutions to the {K}irchhoff equation with spectral gap data
  in the energy space.
\newblock ArXiv:2208.05400.

\bibitem{gg:K-Manfrinosawa}
\textsc{M.~Ghisi}, \textsc{M.~Gobbino}.
\newblock Spectral gap global solutions for degenerate {K}irchhoff equations.
\newblock \emph{Nonlinear Anal.} \textbf{71} (2009), no.~9, 4115--4124.

\bibitem{gg:K-Nishihara}
\textsc{M.~Ghisi}, \textsc{M.~Gobbino}.
\newblock Kirchhoff equations from quasi-analytic to spectral-gap data.
\newblock \emph{Bull. Lond. Math. Soc.} \textbf{43} (2011), no.~2, 374--385.

\bibitem{1980-QAM-GreHu}
\textsc{J.~M. Greenberg}, \textsc{S.~C. Hu}.
\newblock The initial value problem for a stretched string.
\newblock \emph{Quart. Appl. Math.} \textbf{38} (1980/81), no.~3, 289--311.

\bibitem{2006-JDE-Hirosawa}
\textsc{F.~Hirosawa}.
\newblock Global solvability for {K}irchhoff equation in special classes of
  non-analytic functions.
\newblock \emph{J. Differential Equations} \textbf{230} (2006), no.~1, 49--70.

\bibitem{2015-NLATMA-Hirosawa}
\textsc{F.~Hirosawa}.
\newblock A class of non-analytic functions for the global solvability of
  {K}irchhoff equation.
\newblock \emph{Nonlinear Anal.} \textbf{116} (2015), 37--63.

\bibitem{Kirchhoff}
\textsc{G.~Kirchhoff}.
\newblock \emph{Vorlesungen {\"u}ber mathematische Physik -- Mechanik}.
\newblock Teubner, Leipzig, 1876.

\bibitem{2005-JDE-Manfrin}
\textsc{R.~Manfrin}.
\newblock On the global solvability of {K}irchhoff equation for non-analytic
  initial data.
\newblock \emph{J. Differential Equations} \textbf{211} (2005), no.~1, 38--60.

\bibitem{2013-JMPA-MatRuz}
\textsc{T.~Matsuyama}, \textsc{M.~Ruzhansky}.
\newblock Global well-posedness of {K}irchhoff systems.
\newblock \emph{J. Math. Pures Appl. (9)} \textbf{100} (2013), no.~2, 220--240.

\bibitem{1984-Tokyo-Nishihara}
\textsc{K.~Nishihara}.
\newblock On a global solution of some quasilinear hyperbolic equation.
\newblock \emph{Tokyo J. Math.} \textbf{7} (1984), no.~2, 437--459.

\bibitem{1985-Pohozaev}
\textsc{S.~I. Pokhozhaev}.
\newblock A quasilinear hyperbolic {K}irchhoff equation.
\newblock \emph{Differentsial'nye Uravneniya} \textbf{21} (1985), no.~1,
  101--108, 182.

\bibitem{2005-JDE-Yamazaki}
\textsc{T.~Yamazaki}.
\newblock Global solvability for the {K}irchhoff equations in exterior domains
  of dimension three.
\newblock \emph{J. Differential Equations} \textbf{210} (2005), no.~2,
  290--316.

\end{thebibliography}

\label{NumeroPagine}

\end{document}